\pgfplotsset{compat=1.10}
\def\N{\mathbb{N}}
\def\R{\mathbb{R}}
\def\loc{\mathrm{loc}}
\DeclareMathOperator{\dist}{dist}
\newcommand{\pa}{\partial}
\newtheorem{proposition}{Proposition}[section]
\newtheorem{theorem}[proposition]{Theorem}
\newtheorem{corollary}[proposition]{Corollary}
\newtheorem{lemma}[proposition]{Lemma}
\newtheorem{question}[proposition]{Question}
\theoremstyle{definition}
\newtheorem{remark}[proposition]{Remark}
\numberwithin{equation}{section}
\renewcommand{\le}{\leqslant}
\renewcommand{\ge}{\geqslant}
\newcommand{\e}{\varepsilon}
\renewcommand{\epsilon}{\varepsilon}
\newcommand{\supp}{\mathrm{supp}}
\title{A fractional Hopf Lemma for sign-changing solutions}
\author[S.~Dipierro]{Serena Dipierro}  \thanks{}
\address{Serena Dipierro \newline \indent 
Department of Mathematics and Statistics, University of Western Australia, \newline \indent
35 Stirling Highway, Crawley, Perth}
\email{serena.dipierro@uwa.edu.au}
\author[N.~Soave]{Nicola Soave} 
\address{Nicola Soave \newline \indent
	Dipartimento di Matematica ``Giuseppe Peano",  Universit\`a degli Studi di Torino,  \newline \indent
	Via Carlo Alberto 10, 10123 Torino, Italy}
\email{nicola.soave@unito.it}
\author[E.~Valdinoci]{Enrico Valdinoci}  \thanks{}
\address{Enrico Valdinoci \newline \indent 
Department of Mathematics and Statistics, University of Western Australia, \newline \indent
35 Stirling Highway, Crawley, Perth}
\email{enrico.valdinoci@uwa.edu.au}
\thanks{Nicola~Soave is partially supported by the INdAM - GNAMPA Project, cod. CUP\_\,E53C22001930001 ``Regolarit\`a e singolarit\`a in problemi con frontiere libere". Enrico Valdinoci is supported by the Australian Laureate Fellowship FL190100081
``Minimal surfaces, free boundaries and partial differential equations''. It is a pleasure to thank Ovidiu Savin for inspiring discussions.}
\begin{document}

\begin{abstract}
In this paper we prove some results on the boundary behavior of solutions to fractional elliptic problems. Firstly, we establish a Hopf Lemma for solutions to some integro-differential equations. The main novelty of our result is that we do not assume any global condition on the sign of the solutions. Secondly, we show that non-trivial radial solutions cannot have infinitely many zeros accumulating at the boundary.

We provide concrete examples to show that the results obtained are sharp.
\end{abstract}

\date{\today}
\subjclass[2020]{}
\keywords{}

\maketitle

\section{Introduction}

In this paper we prove the validity of a Hopf Lemma for sign-changing solutions of some nonlocal elliptic problems under an additional growth assumption that cannot be removed. 

The motivation for our study comes from the following natural problem:

\begin{question} \label{LADOMA}
Suppose that we have a sign-changing solution of a fractional linear elliptic problem in a domain. Suppose moreover that~$u$ does not change sign in a neighborhood of a boundary point~$x_0$, with~$u(x_0)=0$. Is it true that the fractional normal derivative of~$u$ at the boundary cannot vanish,
unless~$u \equiv 0$? \end{question}

The answer to Question~\ref{LADOMA}
is affirmative for the corresponding local issue, where the Hopf Lemma holds under merely local assumptions (see e.g. \cite[Lemma~3.4]{GiTr}). In contrast, as far as we know, the nonlocal versions of the Hopf Lemma available in the literature so far require either
the positivity of $u$ in the whole space (or some sign-condition for potential of source terms; see e.g. \cite{GrSe} and \cite[Lemma~7.3]{RosOtsur}), or the antisymmetry of $u$ across an hyperplane (see e.g. \cite[Proposition~3.3]{FaJa},
\cite[Proposition~2.2]{SoVa} and~\cite[Lemma 4.1]{ciraolo2021symmetry}). This prevents any direct application to sign changing solutions in domains, which is the main reason for which Question~\ref{LADOMA} is still open. 

In our first main result, namely Theorem~\ref{thm: hopf}, we will give an affirmative answer to Question~\ref{LADOMA} under an additional ``second order fractional'' growth assumption. Then, in Theorem~\ref{thm: hopf:NO}, we will show that
this additional assumption cannot be removed.


We stress that
even believing that the answer to Question~\ref{LADOMA} were positive under a suitable additional local assumption was not obvious. Indeed, the maximum or minimum principle for integro-differential equations holds only under global assumptions, and, very differently from the classical case,
solutions of fractional elliptic equations are known to have local extrema (see \cite{MR3626547}). Furthermore, the classical Harnack inequality fails in the nonlocal setting, unless the positivity of the function is assumed in the whole space (see e.g.~\cite{MR1941020}, \cite{MR2817382} and~\cite[Theorem 3.3.1]{MR3469920}). Therefore,
the analysis of qualitative properties of solutions to nonlocal problems
which relies on the maximum principle cannot be separated from a global information on 
the sign of the solution itself and therefore these types of arguments typically do not go through for
sign-changing solutions (see e.g. \cite{CoIa}). 

Since the maximum principle and the Harnack inequality are conceptually strictly linked to the Hopf Lemma, the failure of these tools for sign-changing solutions in nonlocal cases
might suggest that also Question~\ref{LADOMA} had a negative answer (and this even in the presence of additional assumptions of local nature):
hence we believe that the positive answer that we instead provide in
Theorem~\ref{thm: hopf} is genuinely interesting.
We also think that the fact that the additional growth assumption
cannot be removed, as established in Theorem~\ref{thm: hopf:NO},
provides a nice clarification in terms of the importance of this type of hypotheses.
\medskip

To state our first main result in a precise form, we consider the class of integro-differential operators $L$ defined\footnote{Here we skate around the minor regularity requirements on~$w$ in order
to write~\eqref{def L} pointwise: at this level, we are implicitly assuming~$w$ to
be ``regular enough'', but a more precise setting will be discussed in the forthcoming
Remark~\ref{REGOLL}.} by
\begin{equation}\label{def L}
L w(x) = PV \int_{\R^n} \frac{w(x)-w(x+y)}{|y|^{n+2s}} a\left(\frac{y}{|y|}\right) dy
,\end{equation}
where $PV$ stands for ``principal value", $s\in(0,1)$ and~$a:S^{n-1} \to [0,+\infty)$ is a
nonnegative function in~$L^\infty(S^{n-1})$ such that 
\begin{equation}\label{dieur4bvi}
a(\theta) = a (-\theta) \end{equation}
for every $\theta \in S^{n-1}$ (the unit sphere in $\R^n$), and, for some positive constants $0<\lambda \le \Lambda$,
\begin{equation}\label{def a}
\lambda \le a(\theta) \le \Lambda \qquad \text{for all~$\theta\in S^{n-1}$}.
\end{equation}
Integro-differential operators of the form~\eqref{def L} arise naturally in the study of stochastic processes with jumps, and have been widely studied both in Probability and in Analysis and PDEs. We refer the reader to the introduction of \cite{RosOtsur} and to the references therein for more details.
\medskip

Let us now introduce a natural functional space to look at when dealing with operators as in~\eqref{def L}. 
Given~$\Omega\subseteq\R^n$, we denote by~$ {\mathcal{Z}}_\Omega$ the space of continuous functions~$u:\R^n\to\R$ such that
$$ \int_{\R^n} \frac{|u(x)|}{1+|x|^{n+2s}}\,dx <+\infty$$
and for which the following limit exists for every~$x\in\Omega$:
$$ \lim_{\varepsilon\searrow0} \int_{\R^n\setminus B_\varepsilon} \frac{u(x)-u(x+y)}{|y|^{n+2s}} a\left(\frac{y}{|y|}\right) dy.$$

\medskip

We now describe in further detail the type of additional assumption that ensures the validity of Question~\ref{LADOMA},
stating it in terms of the interior sphere condition. Namely, as customary,
given an open subset~$\Omega$ of~$\R^n$ and
a point~$x_0\in\partial\Omega$, we say that~$\Omega$
satisfies an interior sphere condition at $x_0$
if there exists $\bar r >0$ such that, for every~$r \in (0,\bar r]$, there exists a
ball $B_r(x_r)\subseteq\Omega$ with $x_0\in(\pa \Omega)\cap(\partial B_r(x_r))$.

In this setting,
given a continuous function~$u:\R^n\to\R$ such that~$u(x_0)=0$ and $|u|>0$ in $B_r(x_r)$ for every sufficiently small $r$, we say that \emph{$u$ grows
faster than the power~$2s$ at~$x_0$} if
\begin{equation}\label{OLOms}
\limsup_{r\to0^+} \Phi(r)=+\infty,\qquad{\mbox{where }}\;\Phi(r):=\frac{\displaystyle \inf_{B_{r/2}(x_r)} |u|}{r^{2s}}.
\end{equation}

This is the additional growth condition needed for an affirmative answer to Question~\ref{LADOMA},
according to the following result:

\begin{theorem}\label{thm: hopf}
Let $\Omega$ be an open subset of~$\R^n$, and let~$x_0\in\partial\Omega$.
Assume that~$\pa \Omega$ satisfies an interior sphere condition at $x_0$.

Let $L$ be any operator of the form~\eqref{def L}-\eqref{dieur4bvi}-\eqref{def a}, and let $u\in{\mathcal{Z}}_\Omega$
be such that~$u(x_0) = 0$ and $u^- \in L^\infty(\R^n)$.

Suppose that 
\[
L u \ge V(x) u \quad \text{in $\Omega$}
\]
pointwise, where~$V \in L^{1}_{\loc}(\Omega)$ is such that $V^- \in L^\infty(\Omega)$.

Suppose moreover that there exists~$R>0$ such that $u \ge 0 $ in $B_R(x_0)$, $u>0$ in $B_R(x_0) \cap \Omega$, and that $u$ grows
faster than the power~$2s$ at~$x_0$.

Then, for every~$\beta\in(0,\pi/2)$ we have
\begin{equation}\label{HOP-0ok}
\liminf_{\Omega\ni x \to x_0} \frac{u(x)}{|x-x_0|^s} >0,
\end{equation}
whenever the angle between $x-x_0$ and the vector joining~$x_0$ and the center of the interior sphere is smaller than $\pi/2-\beta$. 

In particular, if $d(x):= \dist(x, \pa \Omega)$, and $u/ d^s$ can be extended as a continuous function on a neighborhood of $x_0$ in $\pa \Omega$, then
\[
\frac{u}{d^s}(x_0) >0.
\]
\end{theorem}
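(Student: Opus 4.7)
The plan is to build a barrier in an interior ball touching $\partial\Omega$ at $x_0$ and use the fractional growth condition~\eqref{OLOms} to absorb the tail contributions coming from the sign-changing behaviour of $u$ outside $B_R(x_0)$.

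First, I would fix $r_0\in(0,\bar r]$ with $2r_0\le R$ along the sequence realising the $\limsup$ in~\eqref{OLOms} (so that $\Phi(r_0)$ can be made as large as needed), consider the interior ball $B_{r_0}(x_{r_0})\subset\Omega$ touching $\partial\Omega$ at $x_0$, and take as barrier $\phi(x):=(r_0^2-|x-x_{r_0}|^2)_+^s$. Its relevant features, obtained by rescaling to the unit ball together with~\eqref{def a}, are: $\phi\equiv0$ outside $B_{r_0}(x_{r_0})$; $\phi\le r_0^{2s}$ and $\phi\ge c_0 r_0^{2s}$ on $B_{r_0/2}(x_{r_0})$; $|L\phi|\le C_1$ on $B_{r_0}(x_{r_0})$ uniformly in $r_0$; and the elementary cone estimate
\[
\phi(x)\ge c_0(r_0\sin\beta)^s|x-x_0|^s
\]
for $x$ in the cone of aperture $\pi/2-\beta$ around $x_{r_0}-x_0$ with $|x-x_0|\le r_0\sin\beta$, coming from $r_0^2-|x-x_{r_0}|^2=|x-x_0|(2r_0\cos\theta-|x-x_0|)$.

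Setting $c_{r_0}:=\eta\Phi(r_0)$ for a small $\eta>0$ to be fixed and $w:=c_{r_0}\phi-u$, I would prove $w\le 0$ on the closed annulus $\overline{A}$, with $A:=B_{r_0}(x_{r_0})\setminus\overline{B_{r_0/2}(x_{r_0})}$. On $\partial B_{r_0/2}(x_{r_0})$, the growth condition and $\phi\le r_0^{2s}$ give $w\le(\eta-1)\Phi(r_0)r_0^{2s}\le 0$ for $\eta\le 1$; on $\partial B_{r_0}(x_{r_0})\subset B_R(x_0)$, $\phi=0$ and $u\ge0$, so $w\le 0$. Thus, if $w$ were positive anywhere, it would attain its positive maximum at some $x^*$ in the open annulus $A$. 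The crucial step is then to estimate $Lw(x^*)$ on both sides. From above, using $Lu\ge Vu$, $u(x^*)\ge 0$, $|L\phi|\le C_1$ and $M_1:=\|u\|_{L^\infty(B_{\bar r}(x_0))}$, one obtains
\[
Lw(x^*)\le\eta\Phi(r_0)C_1+\|V^-\|_\infty M_1.
\]
For the lower bound, I would split the integral defining $Lw(x^*)$ according to the location of $x^*+y$: the regions $x^*+y\in\overline{A}$ and $x^*+y\in B_R(x_0)\setminus B_{r_0}(x_{r_0})$ contribute non-negatively (by the maximum property and by $\phi=0$, $u\ge 0$, respectively); the region $x^*+y\in B_{r_0/2}(x_{r_0})$ yields at least $c_2\lambda(1-\eta)\Phi(r_0)$, via the growth condition combined with $|y|\le 3r_0/2$ and the volume $|B_{r_0/2}|\asymp r_0^n$; finally, the region $x^*+y\in\R^n\setminus B_R(x_0)$ (where $|y|\ge R/2$) contributes at least $-C_T$, with $C_T$ depending only on $\|u^-\|_\infty$, on the weighted $L^1$-norm of $u$ guaranteed by $u\in\mathcal{Z}_\Omega$, on $R$ and on $\Lambda$. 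Choosing $\eta$ small enough (in terms of $c_2,\lambda,C_1$) yields $(c_2\lambda/2)\Phi(r_0)\le C_T+\|V^-\|_\infty M_1$, which contradicts the choice of $r_0$ along the growth sequence as soon as $\Phi(r_0)$ is taken large enough.

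The cone estimate on $\phi$ together with $u\ge c_{r_0}\phi$ on $A$ then gives $u(x)\ge\eta c_0\Phi(r_0)(r_0\sin\beta)^s|x-x_0|^s$ for $x$ in the cone close to $x_0$, which is exactly~\eqref{HOP-0ok}. The final assertion on $(u/d^s)(x_0)$ follows because $d(x)\le|x-x_0|$ along the inward normal, so $u(x)/d(x)^s\ge u(x)/|x-x_0|^s\ge c>0$ along this direction, and by the assumed continuous extension $(u/d^s)(x_0)\ge c>0$. The main obstacle I expect is the sharp tail analysis in the lower bound for $Lw(x^*)$: one must match the kernel/volume scale $r_0^{-n-2s}\cdot r_0^n$ against the mass $\Phi(r_0)r_0^{2s}$ supplied by~\eqref{OLOms} to produce exactly the favourable factor $\Phi(r_0)$, which is why the exponent $2s$ in~\eqref{OLOms} is the critical (and, as Theorem~\ref{thm: hopf:NO} will show, non-removable) threshold.
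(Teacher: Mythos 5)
Your proposal is correct, and it reaches the same conclusion through a recognizably related but technically distinct mechanism. The paper first builds a modified barrier $\varphi = v + C_1\eta$ (Lemma~\ref{lem: sub}), with $\eta$ a bump at the centre of the ball, so that $L\varphi \le -1$ holds in the annulus; it then sets $\psi = \alpha_r\varphi((x-x_r)/r)$, $w = \psi - u^-$, verifies $w \le u$ outside the annulus (using Lemma~\ref{LE9COST-01} to control $-Lu^-$), checks $L(u-w)\ge 0$ inside via the growth condition, and invokes the comparison principle. You instead keep the raw torsion barrier $\phi = (r_0^2-|x-x_{r_0}|^2)_+^s$, for which $L\phi$ is a \emph{positive} constant $k$, and absorb this unfavourable sign by running the comparison as an explicit maximum-point argument: at a putative interior positive maximum $x^*$ of $c_{r_0}\phi - u$ on the annulus you decompose the nonlocal integral, and the contribution from $B_{r_0/2}(x_{r_0})$, which the paper packages into the bump $\eta$, is extracted directly from the jump $(1-\eta)\Phi(r_0)r_0^{2s}$ supplied by the growth hypothesis and matched against the kernel scale $r_0^{-n-2s}|B_{r_0/2}| \asymp r_0^{-2s}$. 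This is the same competition between $\Phi(r_0)$ and the fixed constants $C_T$, $\|V^-\|_\infty M_1$ that appears in the paper, so the two arguments use the growth assumption at exactly the same place, but your route dispenses with Lemmata~\ref{lem: sub} and~\ref{LE9COST-01} at the cost of unwinding the comparison principle by hand. A couple of small points to tidy up when writing it in full: you should make explicit that the PV integral at $x^*$ splits cleanly (the near-singularity falls in the region $x^*+y\in\overline{A}$ where the integrand is nonnegative, hence converges monotonically, while the remaining pieces live at $|y|$ bounded below); $M_1$ should be the sup of $u^+$ over $B_R(x_0)$ rather than over $B_{\bar r}(x_0)$, since $x^*$ need only lie in $B_{2r_0}(x_0)\subset B_R(x_0)$; and note that the paper actually proves the theorem under the weaker tail condition~\eqref{cond u-}, which your $u^-\in L^\infty$ estimate of $C_T$ does not cover, though it suffices for the theorem as stated.
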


In Theorem~\ref{thm: hopf} and in the rest of this paper, we use the standard notation $f^+$, $f^-$ for the positive and the negative part of a function~$f$, namely
\begin{eqnarray*}
f^+(x):=\max\{f(x),0\}\qquad{\mbox{and}}\qquad f^-(x):=f^+(x)-f(x)=\max\{ -f(x),0\}.
\end{eqnarray*}

Changing~$u$ with~$-u$ in Theorem~\ref{thm: hopf}, we obtain the following statement:

\begin{corollary}\label{thm: hopf33}
Let $\Omega$ be an open subset of~$\R^n$, and let~$x_0\in\partial\Omega$.
Assume that~$\pa \Omega$ satisfies an interior sphere condition at $x_0$.

Let $L$ be any operator of the form~\eqref{def L}-\eqref{dieur4bvi}-\eqref{def a}, and let $u\in{\mathcal{Z}}_\Omega$
be such that~$u(x_0) = 0$ and $u^+ \in L^\infty(\R^n)$.

Suppose that 
\[
L u \le V(x) u \quad \text{in $\Omega$}
\]
pointwise, where~$V \in L^{1}_{\loc}(\Omega)$ is such that $V^- \in L^\infty(\Omega)$.

Suppose moreover that there exists~$R>0$ such that $u \le 0 $ in $B_R(x_0)$, $u<0$ in $B_R(x_0) \cap \Omega$, and that $u$ grows
faster than the power~$2s$ at~$x_0$.

Then, for every~$\beta\in(0,\pi/2)$ we have
\begin{equation*}
\limsup_{\Omega\ni x \to x_0} \frac{u(x)}{|x-x_0|^s} <0,
\end{equation*}
whenever the angle between $x-x_0$ and the vector joining~$x_0$ and the center of the interior sphere is smaller than $\pi/2-\beta$. 

In particular, if $d(x):= \dist(x, \pa \Omega)$, and $u/ d^s$ can be extended as a continuous function on a neighborhood of $x_0$ in $\pa \Omega$, then
\[
\frac{u}{d^s}(x_0) <0.
\]
\end{corollary}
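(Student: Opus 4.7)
The plan is to reduce the statement directly to Theorem~\ref{thm: hopf} via the linear substitution $v := -u$. Since the operator $L$ defined in~\eqref{def L} is linear (the principal value integral is a linear operation whenever it exists), one immediately has $Lv = -Lu$, so that the assumption $Lu \le V(x) u$ becomes $Lv \ge V(x) v$ pointwise in~$\Omega$. Moreover, $\mathcal{Z}_\Omega$ is closed under the map $u \mapsto -u$, since both the weighted integrability condition and the existence of the principal value limit are preserved under sign change; hence $v \in \mathcal{Z}_\Omega$.

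Next, I would check that the remaining hypotheses of Theorem~\ref{thm: hopf} are satisfied by $v$. We have $v(x_0) = -u(x_0) = 0$, and $v^- = u^+ \in L^\infty(\R^n)$ by assumption. Since $u \le 0$ in $B_R(x_0)$ and $u < 0$ in $B_R(x_0) \cap \Omega$, we obtain $v \ge 0$ in $B_R(x_0)$ and $v > 0$ in $B_R(x_0)\cap\Omega$. The growth condition~\eqref{OLOms} is formulated entirely in terms of $|u|$, and since $|v| = |u|$ the function $v$ inherits the property of growing faster than the power $2s$ at $x_0$. The potential $V$ is unchanged, so the hypothesis $V^- \in L^\infty(\Omega)$ is trivially preserved, and the interior sphere condition at $x_0$ is a property of $\partial\Omega$ alone.

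With all hypotheses verified, Theorem~\ref{thm: hopf} applied to $v$ yields, for every $\beta \in (0,\pi/2)$,
\[
\liminf_{\Omega\ni x \to x_0} \frac{v(x)}{|x-x_0|^s} > 0
\]
in the prescribed cone around the radial direction of the interior sphere. Multiplying by $-1$ reverses the inequality and turns the $\liminf$ into a $\limsup$, giving exactly the desired conclusion
\[
\limsup_{\Omega\ni x \to x_0} \frac{u(x)}{|x-x_0|^s} < 0.
\]
The final assertion $(u/d^s)(x_0) < 0$ follows in the same way from $(v/d^s)(x_0) > 0$ provided by Theorem~\ref{thm: hopf}.

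There is essentially no obstacle here: the argument is a bookkeeping reduction, and the authors have deliberately phrased the hypotheses of Theorem~\ref{thm: hopf} (in particular the pair $u^- \in L^\infty$ together with $Lu \ge Vu$ and nonnegativity near $x_0$) so that they are exchanged with the hypotheses of the corollary under $u \mapsto -u$. The only mild point worth flagging is the verification that $v \in \mathcal{Z}_\Omega$ and that the growth condition is sign-symmetric, both of which are immediate from their definitions.
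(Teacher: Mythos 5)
Your proposal is correct and follows exactly the paper's approach: the corollary is obtained by applying Theorem~\ref{thm: hopf} to $-u$, using the linearity of $L$ and the sign-symmetry of the hypotheses, just as the paper states (``Changing $u$ with $-u$ in Theorem~\ref{thm: hopf}''). Your systematic verification of each hypothesis for $v=-u$ is a faithful, if slightly more detailed, rendering of that reduction.
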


As a direct consequence of Theorem~\ref{thm: hopf} and Corollary~\ref{thm: hopf33}, we have the following result:

\begin{corollary}\label{CONTRO-0}
Let $\Omega$ be an open subset of~$\R^n$ of class $C^{1,1}$, and let~$x_0\in\partial\Omega$. Let $u \in C^s(\Omega) \cap L^\infty(\Omega)$ be such that 
\[
\begin{cases}
(-\Delta)^s u=V(x) u & \text{in $\Omega$}, \\
u=0 & \text{in $\R^n \setminus \Omega$},
\end{cases} \quad \text{with} \quad u>0 \quad \text{(resp. $u<0$)} \quad \text{in $B_R(x_0) \cap \Omega$}, 
\]
for some $R>0$ and $V \in L^\infty(\Omega)$. 

If $u$ grows
faster than the power~$2s$ at~$x_0$, then 
\[
\frac{|u|}{d^s}(x_0) > 0.
\]
\end{corollary}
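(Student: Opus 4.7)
The plan is to recognize Corollary \ref{CONTRO-0} as a pure hypothesis-checking exercise: one applies Theorem \ref{thm: hopf} when $u>0$ in $B_R(x_0)\cap\Omega$, and Corollary \ref{thm: hopf33} when $u<0$. The last displayed conclusion of those statements is already precisely $u/d^s(x_0)>0$ (resp.\ $<0$), so all that is required is to verify each of their hypotheses in the present setting, including the conditional clause that $u/d^s$ extends continuously in a neighborhood of $x_0$ on $\partial\Omega$.

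The bulk of these hypotheses are immediate. Being of class $C^{1,1}$, the domain $\Omega$ satisfies a uniform interior sphere condition, in particular at $x_0$. The fractional Laplacian $(-\Delta)^s$ is, up to a positive dimensional constant $c_{n,s}$, of the form \eqref{def L} with $a\equiv c_{n,s}$, so \eqref{dieur4bvi} and \eqref{def a} are automatic. By the standard interior regularity for $(-\Delta)^s u = V u$ with $V\in L^\infty$, we have $u\in C^{2s+\alpha}_{\mathrm{loc}}(\Omega)$, hence the principal-value integral defining $L u(x)$ exists pointwise in $\Omega$ and equals $V(x) u(x)$; combined with the boundedness of $u$ and its extension by zero outside $\Omega$, this gives $u\in\mathcal{Z}_\Omega$. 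The sign and growth requirements follow at once: $u(x_0)=0$ because $u$ is continuous on $\mathbb{R}^n$ (being $C^s(\overline\Omega)$ extended by zero) and $x_0\in\partial\Omega$; $u\ge 0$ on $B_R(x_0)$ because $u$ vanishes outside $\Omega$ and is strictly positive on $B_R(x_0)\cap\Omega$; $u^-\in L^\infty(\mathbb{R}^n)$ by boundedness of $u$; and the growth-faster-than-$2s$ condition is postulated.

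The substantive step, and in my view the main obstacle, is the continuous extension of $u/d^s$ near $x_0$, which does not follow from the bare $C^s(\overline\Omega)$ regularity. It must be imported from the boundary regularity theory of Ros-Oton and Serra for the Dirichlet problem $(-\Delta)^s u = f$ in $C^{1,1}$ domains with $f\in L^\infty$, which yields $u/d^s\in C^\alpha(\overline\Omega)$ for some $\alpha\in(0,s)$. Applied here with $f = V u \in L^\infty(\Omega)$, this provides exactly the required continuous extension in a neighborhood of $x_0$. Inserting it into Theorem \ref{thm: hopf} (or Corollary \ref{thm: hopf33} in the case $u<0$) delivers $u/d^s(x_0)>0$ (respectively $<0$), and in either case $|u|/d^s(x_0)>0$, proving the corollary.
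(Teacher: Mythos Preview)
Your proposal is correct and follows the paper's approach exactly: the paper presents Corollary~\ref{CONTRO-0} as a direct consequence of Theorem~\ref{thm: hopf} and Corollary~\ref{thm: hopf33}, with the needed $C^\alpha$ extension of $u/d^s$ up to the boundary supplied by the Ros-Oton--Serra regularity theory (as the paper itself records in Remark~\ref{REGOLL}). Your write-up simply makes the hypothesis verification explicit.
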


Let us now put forth some observations related to the above results.

\begin{remark} We stress that the condition~$V\in L^\infty(\Omega)$ in Corollary~\ref{CONTRO-0} cannot be removed, not even in the case of radial and positive solutions.
See Appendix~\ref{CONTRO} for an explicit example.\end{remark}

\begin{remark} 
For positive solutions of linear problems with $V \in L^\infty(\Omega)$, one always has $u/d^s >0$ at boundary points, by \cite[Lemma 1.2]{GrSe}. Therefore Corollary~\ref{CONTRO-0} is of particular relevance in case of sign-changing solutions.

Note indeed that in our setting we do not assume $u$ to be nonnegative in the whole space $\R^n$, but only to have a sign in the vicinity of a boundary point.\end{remark}

\begin{remark}
We also point out that the assumption that $u^- \in L^\infty(\R^n)$ can be considerably weakened. It is sufficient to suppose that
\begin{equation}\label{cond u-}
u^-(y) \le \bar C\big(1+|y-x_0|^{2s-\delta}\big) \quad {\mbox{ for all }} y \in \R^n,
\end{equation}
for some~$\bar C>$ and~$\delta\in(0,2s)$. For the sake of completeness, we will provide the proof of Theorem \ref{thm: hopf} under the more general assumption~\eqref{cond u-}, rather than~$u^- \in L^\infty(\R^n)$.
\end{remark}

\begin{remark}\label{PRU32S}
Concerning the role of~$\beta$ in Theorem~\ref{thm: hopf},
as customary, a Hopf Lemma needs to avoid ``tangential derivatives'':
in this sense, when~$\Omega$ is a domain of class~$C^1$, the requirement of
Theorem~\ref{thm: hopf} is that the limit in~\eqref{HOP-0ok} occurs along
a given cone of directions which are separated from the tangent hyperplane
(say, by an angle~$\beta$): in particular, in this situation, the limit in~\eqref{HOP-0ok}
can always be taken along the inner normal direction.
\end{remark}

\begin{remark} Comparing~\eqref{OLOms} and~\eqref{HOP-0ok}, we see that Theorem~\ref{thm: hopf} can be considered
as a ``growth improving'' result: very roughly speaking, under a growth assumption with exponent ``slightly better than~$2s$''
as in~\eqref{OLOms}, one obtains a suitable growth assumption in~\eqref{HOP-0ok}
with exponent~$s$ (which is almost ``twice as better'' as the initial assumption)
in a conical subdomain (as detailed in Remark~\ref{PRU32S}).
\end{remark}

\begin{remark}\label{REGOLL}
The regularity assumptions in Theorem~\ref{thm: hopf} (and in particular~\eqref{cond u-}) are rather natural. For instance, let $u$ be a bounded weak solution of a Dirichlet problem of the type
\begin{equation}\label{lin pb st}
\begin{dcases}
Lu = f(x,u) & \text{in $\Omega$}, \\
u = 0 & \text{in $\R^n \setminus \Omega$},
\end{dcases}
\end{equation}
with $\Omega$ open and bounded. If $f \in C^{0,1}_{\loc}(\overline{\Omega} \times \R)$, then $u \in C^{2s+\alpha}(\Omega) \cap C^s(\R^n)$ for some $\alpha>0$, as explained in \cite{RosOtsur}. Thus, $u$ solves the problem
in~\eqref{lin pb st}
pointwise, and, plainly, condition~\eqref{cond u-} holds.

Furthermore, if $\Omega$ is of class $C^{1,1}$, the function~$u/d^s$ can be extended up to~$\pa \Omega$ as a~$C^{0,\alpha}$ function.
More generally, one could also consider distributional solutions, see \cite{RSSejde, RSSeVapoho}.

Finally, we recall that the quantity $u/d^s$ plays the role that the inner normal derivative plays in second-order PDEs. This fact was already observed in the Serrin problem for the fractional Laplacian \cite{DaGV, DiSoVa, FaJa, SoVa}.
\end{remark}

We now observe that the condition that~$u$ grows
faster than the power~$2s$ at~$x_0$ cannot be removed from Theorem~\ref{thm: hopf}:

\begin{theorem}\label{thm: hopf:NO}
Let~$x_0\in\partial B_1$ and~$k\in2\N\cap[2,+\infty)$.
There exist~$\varrho\in(0,1)$ and a compactly supported function~$u\in C(\R^n)\cap C^\infty(B_1)$
such that~$u(x_0) = 0$ with 
\begin{eqnarray*}
&& (-\Delta)^s u=0 \quad \text{in $B_1$,}\\&&
u>0 \quad \text{in $B_{\varrho}(x_0) \setminus\{x_0\}$}\\ {\mbox{and }}
&&\limsup_{x\to x_0}\frac{u(x)}{|x-x_0|^k}=1.
\end{eqnarray*} In particular, the claim in~\eqref{HOP-0ok} does not hold for this~$u$.
\end{theorem}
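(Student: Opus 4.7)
The plan is to construct $u$ explicitly as $u = \varphi + w$, where $\varphi$ is a smooth cutoff of the polynomial $|x-x_0|^k$ (which is smooth because $k$ is even) and $w$ is a correction chosen so that $(-\Delta)^s u \equiv 0$ in $B_1$ and $w$ vanishes at $x_0$ so fast that the behavior of $u$ at $x_0$ is dictated by $\varphi$ alone. After a rotation we may assume $x_0 = e_1$. Fix a smooth cutoff $\chi$ with $\chi \equiv 1$ on $B_r(x_0)$ and $\supp\chi \subset B_{2r}(x_0)$ for some small $r>0$, and set $\varphi(x) := |x-x_0|^k \chi(x) \in C_c^\infty(\R^n)$; then $\varphi \geq 0$, vanishes only at $x_0$ on its support, and agrees with $|x-x_0|^k$ on $B_r(x_0)$.

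To enforce $(-\Delta)^s u \equiv 0$ on $B_1$ I would take $w$ to solve $(-\Delta)^s w = -f$ on $B_1$ with $f := (-\Delta)^s\varphi$, together with some exterior data $g \in C_c^\infty(\R^n \setminus \overline{B_1})$ to be chosen. Split $w = w_0 + w_1$, where $w_0$ is the particular solution with zero exterior data (depending only on $f$) and $w_1$ is the Poisson extension of $g$. The higher boundary regularity for fractional Dirichlet problems (in the spirit of Ros-Oton--Serra and Grubb) gives that both $w_0/d^s$ and $w_1/d^s$ extend as sufficiently regular functions up to $\partial B_1$ in a neighborhood of $x_0$, since $f$ is smooth there and $g$ can be taken to be supported away from $x_0$. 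The heart of the construction is then a flexibility statement for the Poisson extension: by varying $g$ within a sufficiently large finite-dimensional family of bumps supported outside $B_1$ and away from $x_0$, the finitely many Taylor coefficients of $w_1/d^s$ at $x_0$ up to order $k-1$ can be prescribed arbitrarily. Choosing $g$ so as to cancel those of $w_0/d^s$ up to order $k-1$ then yields $w(x)/d(x)^s = O(|x-x_0|^k)$, and hence
\[
w(x) \,=\, O\bigl(d(x)^s\,|x-x_0|^k\bigr) \,=\, O(|x-x_0|^{s+k}) \qquad \text{as } x \to x_0.
\]

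With this choice of $w$, inside $B_r(x_0) \cap B_1$ one has
\[
\frac{u(x)}{|x-x_0|^k} \,=\, 1 \,+\, \frac{w(x)}{|x-x_0|^k} \,=\, 1 \,+\, O(|x-x_0|^s) \,\longrightarrow\, 1 \quad \text{as } x \to x_0,
\]
while outside $B_1$ and close to $x_0$ one has $u = \varphi + g = \varphi = |x-x_0|^k$ since $g$ vanishes there by construction, so the same limit holds from the exterior side; this immediately gives $\limsup_{x \to x_0} u(x)/|x-x_0|^k = 1$ without any rescaling. Positivity in $B_\varrho(x_0) \setminus \{x_0\}$ for a suitably small $\varrho$ is direct: inside $B_r(x_0) \cap B_1$ one has $u = |x-x_0|^k(1+o(1)) > 0$, outside $B_1$ and near $x_0$ one has $u = |x-x_0|^k > 0$, and continuity and compact support of $u$ on $\R^n$ are built into the construction.

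The main obstacle is the flexibility claim in the second paragraph: that by varying the exterior data $g$ within a finite-dimensional family of bumps supported outside $B_1$ and away from $x_0$, the resulting map to the finitely many first Taylor coefficients of $w_1/d^s$ at $x_0$ is surjective. I would establish this by writing $w_1$ via the explicit Poisson kernel
\[
P(x,y) \,=\, c_{n,s}\,\frac{(1-|x|^2)^s}{(|y|^2-1)^s\,|x-y|^n},
\]
noting that $(1-|x|^2)^s/d(x)^s = (1+|x|)^s$ is smooth in $x$ near $x_0$, and computing the Taylor expansion of $P(x,y)/d(x)^s$ at $x = x_0$ as a family of real-analytic functions of $y$; these are linearly independent (being distinct derivatives of an analytic kernel in $y$), so the surjectivity follows by placing bumps at a sufficiently generic $N$-tuple of points $y^{(1)},\dots,y^{(N)} \in \R^n \setminus \overline{B_1}$ and invoking a Vandermonde-type non-degeneracy argument. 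This is in the same spirit as the flexibility results for $s$-harmonic functions of Dipierro--Savin--Valdinoci.
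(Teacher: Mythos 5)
Your proposal takes a genuinely different route from the paper, and while the overall scheme is plausible, it leans on two substantial pieces of machinery that are asserted rather than supplied, and the paper's own proof is considerably more direct.

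The paper's argument is a one-step application of the ``all functions are locally $s$-harmonic'' theorem of Dipierro--Savin--Valdinoci~\cite{MR3626547}: for each coordinate direction $e_j$ one produces a compactly supported function $v_j$, $s$-harmonic in a ball around the origin, whose $k$-jet at the origin is exactly $y_j^k$; one then rescales by a factor $\mu$ and translates so that the $s$-harmonicity region covers all of $B_2 \supset B_1$ while the prescribed-jet point lands on $x_0$; averaging $w_1,\dots,w_n$ over the coordinate directions gives a function that behaves near $x_0$ like $\sum_j ((x-x_0)\cdot e_j)^k$, which is strictly positive for $k$ even, and for which $\limsup_{x\to x_0} u(x)/|x-x_0|^k = \sup_{|\omega|=1}\sum_j\omega_j^k = 1$. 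The boundary $\partial B_1$ plays no special role: all the jet-prescription happens in the interior of a larger ball, so no boundary regularity theory is needed at all.

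Your construction, by contrast, starts from the target function $|x-x_0|^k$ directly and corrects it by an $s$-harmonic piece, and this forces you into two nontrivial commitments that you do not discharge. First, you need $w_0/d^s$ and $w_1/d^s$ to extend as functions of class at least $C^{k,\alpha}$ up to $\partial B_1$ near $x_0$ so that killing the $(k-1)$-jet actually yields $w/d^s = O(|x-x_0|^k)$: but note that your source term $f = (-\Delta)^s\varphi$ does \emph{not} vanish near $\partial B_1 \cap B_{2r}(x_0)$ (the cutoff $\chi$ straddles the boundary), so you cannot appeal to interior estimates; you are genuinely invoking higher-order boundary regularity of $u/d^s$ for the Dirichlet problem with smooth right-hand side, which is a delicate Grubb-type result and should be stated with a precise reference. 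Second, the surjectivity of the map from exterior bumps $g$ to the Taylor coefficients of $w_1/d^s$ at $x_0$ is precisely the boundary-version of the flexibility theorem, and is essentially the content of~\cite{MR3935264} (cited in the paper in a related context); the ``derivatives of an analytic kernel are linearly independent, plus Vandermonde'' sketch is on the right track but is itself a lemma of comparable weight to the one the paper invokes. In short: your route trades one citation of~\cite{MR3626547} for two harder claims (higher boundary regularity of $u/d^s$ and a boundary flexibility statement akin to~\cite{MR3935264}), both of which would need to be proved or cited carefully. If those two ingredients are granted, your construction does work and in fact gives a strictly stronger conclusion (an honest limit rather than a $\limsup$); but as written the proposal has real gaps that the paper's approach avoids entirely by working away from $\partial B_1$.
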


The previous construction exploits the fact that every functions is locally $s$-harmonic up to a small error, see~\cite{MR3626547}, and requires the possibility of playing with the values of $u$ outside of $B_1$, the set where the equation of $u$ is given. At this point it is natural to wonder what happens if instead we fix such values, for instance posing $u \equiv 0$ in $\R^n \setminus \Omega$. In other words, one may consider the following variant of Question \ref{LADOMA}.

\begin{question} \label{LADOMA2}
Suppose that we have a sign-changing solution of a fractional linear elliptic problem (that is, $(-\Delta)^s u = V u$, for some regular potential~$V$) in a domain, with prescribed exterior datum $u \equiv 0$. Suppose moreover that $u$ does not change sign in a neighborhood $B_R(x_0)$ of a boundary point $x_0$. Is it true that the outer fractional normal derivative of $u$ cannot vanish, unless $u \equiv 0$?
\end{question}

We will give an answer, in the negative, to Question~\ref{LADOMA2}
in the forthcoming Theorem~\ref{INU10}. Before that, let us stress that,
by Theorem \ref{thm: hopf}, the answer to Question~\ref{LADOMA2} is affirmative if $u$ has a strict sign in $\Omega \cap B_R(x_0)$, and $u$ grows faster than the power $2s$ at $x_0$. In trying to better understand the admissible behaviors for $u$, we focus at first on the radial problem in a ball for the fractional Laplacian (in fact, we could consider any radially symmetric subset of $\R^n$, but we focus on the ball for the sake of simplicity). In this setting, we can show the following result.

\begin{theorem}\label{thm: not inf many zeros}
Let $\rho>0$, $s \in (0,1)$, $V \in W^{1,p}(B_\rho)$ for some $p>n/(2s)$, and $x_0 \in \partial B_\rho$.
Let $u \in C^s(\R^n) \cap L^\infty(\R^n)$ be a radial solution to
\begin{equation}\label{radial pb}
\begin{cases}
(-\Delta)^s u = V(x) u & \text{in $B_\rho$}, \\
u = 0 & \text{in $\R^n \setminus B_\rho$}.
\end{cases}
\end{equation}
Suppose that $u$ has infinitely many interior zeros accumulating at the boundary of $B_\rho$; namely, suppose that there exists a
sequence~$\{\rho_m\} \subset \R^+$ such that $\rho_m \to \rho^-$ and $u|_{\pa B_{\rho_m}} = 0$ for every $m$. 

Then $u \equiv 0$ in $\R^n$.
\end{theorem}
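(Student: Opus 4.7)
The plan is to argue by contradiction. Suppose $u\not\equiv 0$ is a non-trivial radial solution with zero spheres $\partial B_{\rho_m}$, $\rho_m\to \rho^-$, and set $A_m := \{\rho_m<|x|<\rho_{m+1}\}$, so that $u$ has a definite sign on each $A_m$ and vanishes on $\partial A_m$. The $C^s(\R^n)$-regularity of $u$, together with its vanishing on the two bounding spheres of $A_m$, gives at once the upper bound
\begin{equation*}
M_m := \sup_{A_m}|u| \leq C h_m^s, \qquad h_m := \rho_{m+1}-\rho_m\to 0.
\end{equation*}
The aim is to couple this with an incompatible \emph{lower} bound of the form $M_m\geq c h_m^s$ with $c$ bounded away from $0$, extracted from Theorem~\ref{thm: hopf}.

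The main obstacle is that the sign hypothesis of Theorem~\ref{thm: hopf} requires $u\ge 0$ on a full Euclidean ball around the chosen boundary point, which is violated at every interior zero sphere because $u$ alternates sign between adjacent annuli. I would sidestep this by fixing $m$ large, WLOG with $u>0$ on $A_m$, and by applying Theorem~\ref{thm: hopf} to the truncation $\tilde u := u\,\chi_{\overline{A_m}}$, which is $\ge 0$ on all of $\R^n$. A direct calculation shows that
\begin{equation*}
(-\Delta)^s \tilde u(x)= V(x)\tilde u(x) + I_m(x)\ \text{in}\ A_m,\quad I_m(x):=c_{n,s}\!\!\int_{\R^n\setminus A_m}\!\!\frac{u(y)}{|x-y|^{n+2s}}dy.
\end{equation*}
Using the uniform estimate $\sup_{A_{m'}}|u|\leq C h_{m'}^s$ across all $m'$, the summability $\sum_{m'} h_{m'}\le \rho-\rho_1$, and the geometric decay of the kernel away from $A_m$, the correction $I_m$ can be absorbed into a bounded modified potential $\tilde V\in L^\infty(A_m)$, so that $(-\Delta)^s \tilde u\ge\tilde V\tilde u$ in $A_m$ and Theorem~\ref{thm: hopf} becomes applicable (with the weakened hypothesis~\eqref{cond u-}, since $\tilde u^-\equiv 0$).

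Applying Theorem~\ref{thm: hopf} to $\tilde u$ at a point $x_0\in \partial A_m$ then reduces to verifying the growth-faster-than-$2s$ condition~\eqref{OLOms}. Here the $W^{1,p}$-regularity of $V$ with $p>n/(2s)$ is essential: a standard bootstrap in the spirit of~\cite{RosOtsur} gives the Hölder continuity of $u/d^s$ up to $\partial A_m$, and the radial symmetry of $u$ forces this quotient to be constant along each bounding sphere. A dichotomy then holds: either the constant is $0$ on a bounding sphere of $A_m$, in which case a unique continuation argument (relying again on the $W^{1,p}$ assumption) forces $u\equiv 0$ in $A_m$, and iteration on inner annuli closes the proof; or the constant is strictly positive, validating~\eqref{OLOms}. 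In the latter case Theorem~\ref{thm: hopf} yields the conical lower bound $u(x)\geq c|x-x_0|^s$ inside $A_m$; matching it with $M_m \leq C h_m^s$ together with a uniform-in-$m$ control of the Hopf constant produces a strictly positive lower bound on $h_m$, contradicting $h_m\to 0$.

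The hardest part is the quantitative control in Step~2 and~3: the absorption of the non-local correction $I_m$ into a bounded potential and the independence on $m$ of the Hopf constant both hinge on the full strength of the uniform estimate $M_{m'}\leq Ch_{m'}^s$ across all annuli and the summability $\sum_{m'}h_{m'}<+\infty$ coming from $\rho_{m'}\to \rho^-$.
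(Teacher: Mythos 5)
Your approach is genuinely different from the paper's (which uses the Caffarelli--Silvestre extension, the boundary blow-up classification of De Luca--Felli--Vita, and unique continuation for the degenerate extension problem), but it contains a gap that I believe cannot be repaired in the form you propose.

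The critical step is absorbing the nonlocal correction $I_m$ into a bounded modified potential $\tilde V\in L^\infty(A_m)$. This fails. Fix $x\in A_m$ at distance $d$ from the inner sphere $\partial B_{\rho_m}$ and suppose (say) $u>0$ on $A_m$, so $u<0$ on $A_{m-1}$. Since $u\in C^s$ and vanishes on $\partial B_{\rho_m}$, we only know $|u(y)|\le C\,\mathrm{dist}(y,\partial B_{\rho_m})^s$ for $y\in A_{m-1}$. The contribution of $A_{m-1}$ to $I_m(x)$ is negative, and the standard slicing estimate gives
\[
\int_{A_{m-1}}\frac{|u(y)|}{|x-y|^{n+2s}}\,dy
\;\gtrsim\;\int_0^{h_{m-1}}\frac{t^{s}}{(d+t)^{1+2s}}\,dt\;\sim\; d^{-s}\qquad\text{as }d\to 0^+,
\]
so $I_m(x)\sim -d^{-s}$ near $\partial B_{\rho_m}$. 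Meanwhile $u(x)\lesssim d^{s}$ there, so the quotient $I_m/u$ behaves like $-d^{-2s}$, which is not in $L^\infty(A_m)$. Consequently $\tilde u$ does \emph{not} satisfy $L\tilde u\ge \tilde V\tilde u$ with $\tilde V^-$ bounded, and Theorem~\ref{thm: hopf} cannot be invoked for $\tilde u$. This is not a technical inconvenience: the paper itself emphasizes (Appendix~\ref{CONTRO}) that the Hopf Lemma genuinely fails when the negative part of the potential is allowed to blow up at the boundary, which is exactly the situation your truncation produces.

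There are two further problems with the surrounding dichotomy. First, the quotient $u/d^s$ has no reason to be H\"older up to an \emph{interior} zero sphere $\partial B_{\rho_m}$: the boundary regularity theory of $u/d^s$ ``in the spirit of~\cite{RosOtsur}'' is tied to the actual boundary $\partial B_\rho$ where $u\equiv 0$ outside, not to level sets in the interior. In fact, by interior regularity $u$ is $C^{\min(1,2s)+\alpha}$ near any interior point, so $u$ vanishes at $\partial B_{\rho_m}$ faster than $d^s$ (and faster than $d^{2s}$ when $s\le 1/2$), which means the ``growth faster than $2s$'' hypothesis~\eqref{OLOms} at $x_0\in\partial B_{\rho_m}$ actually fails for $s\le 1/2$, so the Hopf Lemma cannot enter even if the potential issue were fixed. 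Second, the unique continuation input you invoke when ``the constant is $0$'' is a statement about vanishing of both $u$ and $u/d^s$ on a portion of the true boundary (as in De Luca--Felli--Vita); at an interior zero sphere, where $u$ is regular and changes sign, it does not say anything. The paper sidesteps all of this by working at an actual boundary point $z_0\in\partial B_\rho\times\{0\}$: it blows up the extension, shows that the accumulating zero spheres force the homogeneous blow-up limit $\Psi$ to vanish on an open subset of the free portion $\Gamma^-$ of $\{y=0\}$, and then concludes $\Psi\equiv 0$ by unique continuation for the degenerate equation, contradicting the non-triviality guaranteed by the Almgren-type monotonicity formula.
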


The proof of this theorem uses the recent characterization of the blow-up boundary behavior of solutions to linear fractional elliptic problems of type \eqref{radial pb}, given in~\cite{DLFeVi} (we need the assumption that $V \in W^{1,p}(\Omega)$ precisely in order to exploit the main results in \cite{DLFeVi}). In fact, we show that under the assumptions of Theorem~\ref{thm: not inf many zeros} the blow-up limit around any boundary point must vanish identically, and this is possible only if $u \equiv 0$ from the beginning. 

By Theorem \ref{thm: not inf many zeros}, we deduce that any non-trivial radial solution to the linear problem \eqref{radial pb} cannot vanish infinitely many times in a neighborhood of a boundary point. Therefore, if $x_0 \in \pa B_\rho$, then either~$u>0$ or~$u<0$ in~$B_\rho \cap B_R(x_0)$, for some $R>0$. Moreover, by Theorem \ref{thm: hopf}, the following alternative holds:
\begin{itemize}
\item[($i$)] either $u$ grows faster than $2s$ at the boundary, and then
\[
\frac{u}{d^s}(x_0) \neq 0 \quad \text{at any boundary point $x_0$}.
\]
\item[($ii$)] or $u$ does not grow faster than $2s$ at the boundary, and then
\[
\frac{u}{d^s}(x_0) = 0 \quad \text{at any boundary point $x_0$}.
\]
\end{itemize}

It is interesting to investigate under which conditions phenomena as in alternative ($ii$) take place.
With respect to this,
by leveraging some explicit representations and careful expansions at boundary and interior points, we give a negative answer to Question~\ref{LADOMA2}, according to the following result:

\begin{theorem}\label{INU10}
Let~$\rho>0$. Then, there exist~$V\in C^\infty_c(B_\rho)$ and~$u \in C^s(\R^n)$ satisfying
\begin{equation}\label{AKMSweFSINNSwerfmeacL}
\begin{cases}
(-\Delta)^s u = V(x) u & \text{in $B_\rho$}, \\
u = 0 & \text{in $\R^n \setminus B_\rho$},
\end{cases}
\end{equation}
such that $u$ does not vanish identically and does not change sign in a neighborhood of~$\rho e_1$ in~$B_\rho$, with
$$ \lim_{B_\rho\ni x\to\rho e_1}\frac{u}{d^s}(x)=0.$$
Also, $u$ is rotationally symmetric. 
\end{theorem}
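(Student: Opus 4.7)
The plan is to construct $u$ as a scaled difference of two fractional Dirichlet problems with smooth, compactly supported right-hand sides, chosen so that the leading boundary trace $u/d^s$ vanishes at $\rho e_1$, and then to read off $V$ from the equation itself.

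Fix $\rho_0\in(0,\rho)$ and pick two nonnegative radial bump functions $\phi_1,\phi_2\in C^\infty_c(B_{\rho_0})$ with disjoint closed supports. For $i=1,2$, let $u_i\in C^s(\R^n)\cap C^\infty(B_\rho)$ be the unique solution of $(-\Delta)^s u_i=\phi_i$ in $B_\rho$ with $u_i\equiv 0$ outside $B_\rho$; by the strong maximum principle $u_i>0$ in $B_\rho$. The explicit Riesz/Kulczycki Green's function on the ball gives the boundary expansion
\[
\lim_{B_\rho\ni x\to x_0}\frac{u_i(x)}{d(x)^s}=\gamma_{n,s}\int_{B_\rho}\frac{(\rho^2-|y|^2)^s_+}{|x_0-y|^n}\,\phi_i(y)\,dy=:B_i>0,\qquad x_0\in\partial B_\rho,
\]
a value independent of $x_0$ by radial symmetry. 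Setting $c:=B_1/B_2>0$, $f:=\phi_1-c\phi_2\in C^\infty_c(B_{\rho_0})$, and $u:=u_1-c u_2$, linearity yields $(-\Delta)^s u=f$ in $B_\rho$, $u\equiv 0$ outside $B_\rho$, and $\lim_{x\to\rho e_1}u(x)/d(x)^s=0$. Plainly $u\not\equiv 0$ (as $\phi_1,\phi_2$ have disjoint supports, $f\not\equiv 0$), and $u$ must change sign in $B_\rho$: otherwise the classical fractional Hopf Lemma for one-signed solutions (\cite[Lemma~1.2]{GrSe}, which applies since $f\in L^\infty$) would force $u/d^s\neq 0$ at $\rho e_1$.

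The crux is to choose $\phi_1,\phi_2$ so that the zero set $\{u=0\}$ lies entirely in the open annular gap $B_{\rho_0}\setminus(\supp\phi_1\cup\supp\phi_2)$, where $f$ vanishes identically. Placing $\supp\phi_1$ in a thin spherical shell near $\partial B_{\rho_0}$ and $\supp\phi_2$ in a small ball around the origin, quantitative estimates on the Green's function $G_{B_\rho}(x,y)$ (exploiting its decay when $x$ is far from $y$) show that $u_1$ dominates $cu_2$ on $\supp\phi_1$ and on $B_\rho\setminus B_{\rho_0}$, while $cu_2$ dominates $u_1$ on $\supp\phi_2$. Hence $u>0$ on open neighborhoods of $\supp\phi_1$ and of $B_\rho\setminus B_{\rho_0}$, and $u<0$ on an open neighborhood of $\supp\phi_2$; the only radial zero sphere of $u$ lies strictly in the interior of the gap. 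In particular $u$ keeps a positive sign in a neighborhood of $\rho e_1$ inside $B_\rho$.

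Finally, define $V(x):=f(x)/u(x)$ wherever $u(x)\neq 0$ and $V(x):=0$ otherwise. On open neighborhoods of $\supp\phi_1$ and $\supp\phi_2$ the function $V$ is smooth, as $u$ is bounded away from zero there and $f,u$ are $C^\infty$; on the open gap $f\equiv 0$ forces $V\equiv 0$; and since $f\in C^\infty_c$ vanishes to infinite order at $\partial(\supp f)$, so does $V$, and the patching is smooth, yielding $V\in C^\infty_c(B_{\rho_0})\subseteq C^\infty_c(B_\rho)$. By construction $(-\Delta)^s u=f=Vu$ holds pointwise in $B_\rho$, $u$ is radial, nontrivial, has $u/d^s(\rho e_1)=0$, and keeps a definite sign near $\rho e_1$, so $u$ and $V$ satisfy all the conclusions of the theorem. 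The main obstacle is the quantitative sign placement in the previous paragraph via explicit Green's function analysis on the ball; everything else follows from standard boundary regularity for the fractional Dirichlet problem.
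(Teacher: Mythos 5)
Your overall blueprint matches the paper's: solve $(-\Delta)^s u_j=\phi_j$ for two nonnegative, radial, compactly supported sources with disjoint supports, form $u=u_1-cu_2$ with $c$ tuned so that $u/d^s$ vanishes on $\partial B_\rho$, and then define $V=(\phi_1-c\phi_2)/u$. However, there are two genuine gaps exactly where the real work lies.

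First, your central assertion -- that the zero set $\{u=0\}$ avoids $\supp\phi_1\cup\supp\phi_2$ -- is only announced, not proved. You write that ``quantitative estimates on the Green's function\dots show that $u_1$ dominates $cu_2$ on $\supp\phi_1$\dots while $cu_2$ dominates $u_1$ on $\supp\phi_2$,'' but this is precisely the technical core. In the paper the sources are concentrated at scale~$\e$ (a uniform bump near the origin and a shell near $|x|=\rho_0$), the coefficient $c_{j,\e}$ is controlled via an explicit boundary expansion of the Green function (Lemma~\ref{TAYGRE}), and the two non-vanishing claims~\eqref{SUPPO-1}--\eqref{SUPPO-2} are proved by a contradiction argument in the limit $\e\searrow0$, using both the boundary expansion and the interior blow-up of the Green function (Lemma~\ref{Nmm-POJL}). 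Moreover, the case $s>\tfrac12$ (so $n<2s$, $n=1$) is qualitatively different: the Green kernel is bounded on the diagonal, and the argument instead relies on showing $\int_{\partial B_1}|e-\omega|^{2s-n}\,dH^{n-1}_\omega\neq H^{n-1}(\partial B_1)$ via an ad hoc convexity inequality. None of this is a consequence of ``decay of the Green function when $x$ is far from $y$,'' and a fixed pair of bumps (without a scaling parameter to exploit) would leave you with an estimate you cannot close.

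Second, your claim that $u$ keeps a definite sign on a neighborhood of $\partial B_\rho$ (``$u_1$ dominates $cu_2$ on $B_\rho\setminus B_{\rho_0}$'') cannot be obtained by Green-function dominance. By construction the leading boundary traces of $u_1$ and $cu_2$ coincide, so $u=o(d^s)$ near $\partial B_\rho$ and the sign there is governed by a higher-order term to which crude comparison gives no access. The paper sidesteps this by invoking Theorem~\ref{thm: not inf many zeros}: since $u$ is radial and solves a linear fractional equation, it cannot have infinitely many zeros accumulating at the boundary, hence has a definite sign near $\rho e_1$. Your argument should use that result (or supply an equally strong replacement); without it, the ``does not change sign near $\rho e_1$'' part of the conclusion is unjustified.

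The remaining steps (radiality, $c>0$, the smooth patching of $V$ across $\partial(\supp f)$ using vanishing to infinite order, and the linearity trick $(-\Delta)^su=f=Vu$) are sound and essentially as in the paper.
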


For completeness, we recall that the radial eigenvalue problem (which corresponds to~$V(x) = const.$ in~\eqref{radial pb}) presents a different structure
and in this case we note that~$\frac{u}{d^s}$ never vanishes along the boundary
of the ball, as expressed by the following result:

\begin{corollary}\label{eigenv}
Let $\rho>0$ and $s \in (0,1)$. Let $u \in C^s(\R^n) \cap L^\infty(\R^n)$ be a radial eigenfunction of the fractional Laplacian in~$B_\rho$ with homogeneous Dirichlet exterior datum, namely~$u \not \equiv 0$ is such that
\begin{equation}\label{ev pb}
\begin{cases}
(-\Delta)^s u = \lambda u & \text{in $B_\rho$}, \\
u = 0 & \text{in $\R^n \setminus B_\rho$},
\end{cases}
\end{equation}
for some $\lambda>0$. Then
\[ \lim_{B_\rho\ni x\to x_0}
\frac{u}{d^s}(x) \neq 0 \quad \text{at any boundary point $x_0 \in \pa B_\rho$}.
\]
\end{corollary}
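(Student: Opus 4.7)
The plan is to exploit the rigidity of the eigenvalue equation via a Pohozaev-type identity. Since $V\equiv\lambda$ is bounded (and in fact $W^{1,p}(B_\rho)$ for every $p$), Theorem~\ref{thm: not inf many zeros} applies and shows that $u$ has a strict sign in a one-sided neighborhood of every boundary point, while the Ros-Oton--Serra boundary regularity theory~\cite{RosOtsur} provides a $C^{0,\alpha}$ extension of $u/d^s$ up to $\overline{B_\rho}$. By radial symmetry of $u$ and of $d$, this boundary trace is constant on $\pa B_\rho$; call the constant $c$. In particular the limit in the statement exists at every $x_0\in\pa B_\rho$ and equals $c$, so the corollary reduces to proving $c\neq 0$.

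Arguing by contradiction, assume $c=0$ and invoke the fractional Pohozaev identity of Ros-Oton and Serra on $B_\rho$:
\begin{equation*}
\int_{B_\rho}(x\cdot\nabla u)(-\Delta)^s u\,dx=\frac{2s-n}{2}\int_{B_\rho}u\,(-\Delta)^s u\,dx-\frac{\Gamma(1+s)^2}{2}\int_{\pa B_\rho}\Big(\frac{u}{d^s}\Big)^{\!2}(x\cdot\nu)\,d\sigma.
\end{equation*}
Substituting $(-\Delta)^s u=\lambda u$ and integrating by parts on the left-hand side (using that $u=0$ on $\pa B_\rho$, which follows from $u\in C^s(\R^n)$ together with the exterior Dirichlet condition), one obtains $\int_{B_\rho}(x\cdot\nabla u)u\,dx=-\tfrac{n}{2}\int_{B_\rho}u^2\,dx$. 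The boundary term in the Pohozaev identity vanishes by the assumption $c=0$ (and since $x\cdot\nu=\rho$ is constant on $\pa B_\rho$), so the identity collapses to
\begin{equation*}
-\frac{n\lambda}{2}\int_{B_\rho}u^2\,dx=\frac{(2s-n)\lambda}{2}\int_{B_\rho}u^2\,dx,
\end{equation*}
i.e.\ $s\lambda\int_{B_\rho}u^2\,dx=0$. Since $s,\lambda>0$ and $u\not\equiv 0$, this is a contradiction, so $c\neq 0$, which is the desired conclusion.

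The only point requiring some care is to check that the Pohozaev identity of~\cite{RosOtsur} applies in our regularity class: this is standard since $\pa B_\rho$ is of class $C^{1,1}$, $V$ is smooth and bounded, and $u\in C^s(\R^n)\cap L^\infty(\R^n)$. Conceptually, the argument highlights that the \emph{constancy} of the potential in the eigenvalue problem is what enforces the non-vanishing of $u/d^s$ along $\pa B_\rho$, in sharp contrast with Theorem~\ref{INU10}, where a carefully chosen $V\in C^\infty_c(B_\rho)$ permits exactly the degenerate behaviour that Pohozaev forbids here.
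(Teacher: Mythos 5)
Your proof is correct and follows exactly the route the paper indicates (the paper supplies no detailed argument, just that the corollary ``follows directly from the Pohozaev identity for the fractional Laplacian''), and your Pohozaev computation yielding $s\lambda\int_{B_\rho}u^2\,dx=0$ is sound. The invocation of Theorem~\ref{thm: not inf many zeros} is superfluous for this argument (the Ros-Oton--Serra $C^{0,\alpha}$ boundary regularity of $u/d^s$ plus radial symmetry already give the constant boundary trace), and the Pohozaev identity should be attributed to~\cite{RSSe} rather than to the survey~\cite{RosOtsur}, but these are cosmetic points.
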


This corollary follows directly from the Pohozaev identity for the fractional Laplacian \cite{RSSe}, and crucially relies on the specific form of the right hand side in \eqref{ev pb}.

\medskip

The paper is organized as follows. In Section~\ref{sec: hopf} we provide the proof of Theorem~\ref{thm: hopf}, Section~\ref{sec3ugrihvrkjegi}
is devoted to the proof of Theorem~\ref{thm: hopf:NO} and Section~\ref{sec: non ex} to the proof of
Theorem~\ref{thm: not inf many zeros}. 
The proof of Theorem~\ref{INU10} is contained in Section~\ref{INU10S}.

\section{Proof of the Hopf Lemma in Theorem~\ref{thm: hopf}}\label{sec: hopf}

Here we prove Theorem~\ref{thm: hopf}.
For this,
we recall from \cite[Lemma 5.4]{RosOtsur} that whenever $L$ is a stable operator of the form~\eqref{def L}-\eqref{def a}, the function
\begin{equation}\label{def v}
v(x) = (1-|x|^2)_+^s
\end{equation}
is a solution to
\[
\begin{dcases}
L v=k & \text{in $B_1$,} \\
v = 0 & \text{in $\R^n \setminus B_1$},
\end{dcases}
\]
for some positive constant $k$. This allows us to prove that:

\begin{lemma}\label{lem: sub}
There exist a nonnegative function $\varphi \in C^s(\R^n)$ and a constant $C>0$ 
depending on~$n$, $s$, $\lambda$ and~$\Lambda$
such that
\[
\begin{dcases}
L \varphi \le -1 & \text{in }B_1 \setminus \overline{B_{1/2}}, \\
\varphi \ge (1-|x|^2)_+^s & \text{in $B_1$,} \\
\varphi \le C & \text{in }B_{1/2}, \\
\varphi = 0 & \text{in }\R^n \setminus B_1.
\end{dcases}
\]
\end{lemma}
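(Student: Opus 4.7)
The plan is to build $\varphi$ as a perturbation of $v(x)=(1-|x|^2)_+^s$, using the already-recorded identity $Lv=k$ on $B_1$ for some $k=k(n,s,\lambda,\Lambda)>0$ coming from \cite[Lemma~5.4]{RosOtsur}. Since $v$ itself is strictly positive (not negative) under $L$ in the annulus, I need to correct it by something whose $L$-value is comfortably negative there. The natural candidate is a nonnegative bump supported \emph{away} from the annulus, namely inside $B_{1/2}$: at a point $x$ in the annulus where the bump vanishes, the principal value integral reduces to a purely negative contribution from the mass of the bump seen across $x$.

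Concretely, I would fix a function $\psi\in C^\infty_c(\R^n)$ with $\supp\psi\subseteq \overline{B_{1/2}}$, $\psi\ge 0$ everywhere, and $\psi\ge 1$ on $B_{1/4}$, and then set
\[
\varphi(x)=v(x)+A\,\psi(x)
\]
for a large constant $A>0$ to be chosen. The easy requirements are immediate: $\varphi\in C^s(\R^n)$ because $v$ is (the $C^s$ regularity of $v$ on $\R^n$ is standard) and $\psi$ is smooth with compact support; $\varphi\ge v$ on $B_1$ because $\psi\ge 0$; $\varphi\equiv 0$ outside $B_1$ because both summands vanish there; and $\varphi\le 1+A\|\psi\|_{L^\infty}=:C$ on $B_{1/2}$, with $C$ depending only on the stated parameters once $A$ is fixed.

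The substantive step is the inequality $L\varphi\le -1$ on $B_1\setminus\overline{B_{1/2}}$. For such $x$ one has $\psi(x)=0$, so there is no principal value issue and
\[
L\psi(x)=-\int_{\R^n}\frac{\psi(z)}{|z-x|^{n+2s}}\,a\!\left(\frac{z-x}{|z-x|}\right)dz
\le -\lambda\int_{B_{1/4}}\frac{dz}{|z-x|^{n+2s}}.
\]
Since $x\in B_1$ and $z\in B_{1/4}$ imply $|z-x|\le 5/4$, this is bounded above by a strictly negative constant $-c_0$ depending only on $n,s,\lambda$. Using $Lv=k$ on $B_1$, one then gets $L\varphi=k+AL\psi\le k-Ac_0$ on the annulus, and choosing $A=(k+1)/c_0$ yields $L\varphi\le -1$ there. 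All constants have the advertised dependence on $n,s,\lambda,\Lambda$.

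I do not expect a real obstacle in this proof: the only points that need care are (i)~making sure the bump really is supported in $\overline{B_{1/2}}$ so that the annulus lies outside its support, which simultaneously removes the principal value and forces $L\psi$ to be negative there, and (ii)~tracking that $c_0$ can be taken uniform in $x\in B_1\setminus\overline{B_{1/2}}$, which follows since $|z-x|$ is bounded above uniformly in this region. The lower ellipticity bound $a\ge\lambda$ is what guarantees the bump's mass contributes definitively to $L\psi$, while $\Lambda$ enters only through the constant $k$ produced by \cite[Lemma~5.4]{RosOtsur}.
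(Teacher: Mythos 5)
Your proof is correct and follows essentially the same route as the paper: the paper also perturbs $v(x)=(1-|x|^2)_+^s$ by a nonnegative smooth bump (the paper chooses $\eta\in C^\infty_c(B_{1/4})$ normalized to have unit mass rather than your $\psi\ge 1$ on $B_{1/4}$, but this is cosmetic) and uses the same vanishing-at-the-annulus observation together with $a\ge\lambda$ and the bound $|z-x|\le 5/4$ to get a uniform negative bound on the bump's $L$-value in the annulus, then scales the bump to beat $k$.
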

\begin{proof}
Let $\eta \in C^\infty_c(B_{1/4})$ be a nonnegative function with $\int_{\R^n} \eta = 1$. For $x \in B_1 \setminus \overline{B_{1/2}}$, we have that
\begin{eqnarray*}
L \eta(x) = -PV \int_{\R^n} \frac{\eta(x+y)}{|y|^{n+2s}} a\left(\frac{y}{|y|}\right)\,dy \le -\lambda \int_{ B_{1/4}(-x)} \frac{\eta(x+y)}{|y|^{n+2s}}\,dy,
\end{eqnarray*}
thanks to~\eqref{def a}. Notice that if~$y\in B_{1/4}(-x)$ then~$|y|\le |x+y|+|x|\le \frac14 +1=\frac54$, and thus
$$ L \eta(x) \le -\lambda \left(\frac45\right)^{n+2s} \int_{ B_{1/4}(-x)}\eta(x+y)\,dy
=-\lambda \left(\frac45\right)^{n+2s}\int_{B_{1/4}} \eta(z)\,dz=-\lambda \left(\frac45\right)^{n+2s}.$$
Therefore, the function $\varphi:= v+C_1 \eta$ fulfills all the desired requirements, for $C_1>0$ sufficiently large. 
\end{proof}

Now we present an integral computation of general use, to be employed in the proof of Theorem~\ref{thm: hopf}.

\begin{lemma}\label{LE9COST-01}
Let $\Omega$ be an open subset of~$\R^n$, and let~$x_0\in\partial\Omega$.
Let $L$ be any operator of the form~\eqref{def L}-\eqref{dieur4bvi}-\eqref{def a}, and let $u:\R^n \to \R$ be a continuous function such that
\begin{equation}\label{1.3BIS}
u^-(y) \le \bar C\big(1+|y-x_0|^{2s-\delta}\big) \quad {\mbox{ for all }} y \in \R^n,
\end{equation}
for some~$\bar C>$ and~$\delta\in(0,2s)$. 

Suppose that $u \ge 0$ in a ball $B_R(x_0)$.
Let also~${\mathcal{S}}\Subset B_R(x_0)$
and~$d_0$ denote the distance between~${\mathcal{S}}$ and~$\partial B_R(x_0)$.

Then, there exists~$\widetilde C>0$,
depending on~$n$,
$s$, $\Lambda$, $R$ and $\bar C$ such that,
for every~$x\in{\mathcal{S}}$,
\begin{equation*}
-L u^-(x)\le
\widetilde C\left(\frac1{d_0^{2s}}+\frac1{d_0^\delta}\right).
\end{equation*}
\end{lemma}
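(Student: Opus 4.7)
The approach is driven by the assumption $u\ge 0$ on $B_R(x_0)$: the function $u^-$ vanishes on a whole ball of radius $d_0$ around every point of $\mathcal{S}$, so the principal value in the definition of $Lu^-(x)$ disappears and we are left with a convergent tail integral. The proof then reduces to routine tail estimates against the polynomial growth allowed by~\eqref{1.3BIS}.

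First I would note that, for every $x\in\mathcal{S}$, the ball $B_{d_0}(x)$ is contained in $B_R(x_0)$, so $u^-(x)=0$ and $u^-(x+y)=0$ for every $|y|<d_0$. Consequently
\[
-Lu^-(x) = \int_{\R^n\setminus B_{d_0}} \frac{u^-(x+y)}{|y|^{n+2s}}\, a\!\left(\frac{y}{|y|}\right) dy,
\]
and the principal value is no longer necessary, since the integrand is non-singular at the origin. Observe also that, in this step, neither the symmetry assumption~\eqref{dieur4bvi} nor the lower bound in~\eqref{def a} plays any role; only $a\le\Lambda$ will be used.

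Next, applying $a\le\Lambda$ together with~\eqref{1.3BIS} and the triangle inequality $|x+y-x_0|\le|y|+R$ (valid because $x\in B_R(x_0)$), I would estimate
\[
-Lu^-(x) \;\le\; \Lambda\,\bar C \int_{|y|\ge d_0}\frac{1+\bigl(|y|+R\bigr)^{2s-\delta}}{|y|^{n+2s}}\, dy.
\]
Splitting $(|y|+R)^{2s-\delta}$ into an $|y|^{2s-\delta}$ part and an $R^{2s-\delta}$ part (up to a dimensional constant depending on $2s-\delta\in(0,2s)$) reduces the matter to the two elementary tail integrals
\[
\int_{|y|\ge d_0}\frac{dy}{|y|^{n+2s}} \qquad\text{and}\qquad \int_{|y|\ge d_0}\frac{dy}{|y|^{n+\delta}},
\]
which, by integration in polar coordinates, are comparable to $d_0^{-2s}$ and $d_0^{-\delta}$ respectively, with constants depending only on $n$, $s$ and $\delta$.

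Finally, since $\mathcal{S}\Subset B_R(x_0)$ forces $d_0\le R$, the residual $R^{2s-\delta}$ factor produced by the constant part of the split can be absorbed into $\widetilde C$ by the trivial inequality $d_0^{-2s}\ge R^{-2s}$, and we arrive at the claimed bound with $\widetilde C$ depending only on $n,s,\Lambda,R,\bar C$. I do not foresee any genuine obstacle: this is a standard fractional tail estimate, and the only two points to watch are the removal of the principal value (which is what makes the estimate clean, and which is ensured by $u^-\equiv 0$ on $B_{d_0}(x)$) and the cleanup of constants via $d_0\le R$.
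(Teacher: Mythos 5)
Your proposal is correct and follows essentially the same route as the paper: using $u^-\equiv 0$ on $B_{d_0}(x)$ to drop the principal value and restrict to $|y|\ge d_0$, bounding $a\le\Lambda$, invoking the growth condition with the triangle inequality $|x+y-x_0|\le R+|y|$, splitting the power, and evaluating the two resulting tail integrals. The small remark about $d_0\le R$ is unnecessary (the $R^{2s-\delta}$ factor is directly absorbed into $\widetilde C$, which is allowed to depend on $R$), but it does no harm.
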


\begin{proof} Since~$x\in{\mathcal{S}}\subset B_R(x_0)\subseteq\{u\ge0\}$, we have that~$u^-(x)=0$.

Furthermore, if~$y\in\R^n$ is such that~$u^-(x+y)>0$, then necessarily~$x+y\in\R^n\setminus B_R(x_0)$.
Hence, $|y|=|(x+y)-x|\ge d_0$. 

Consequently, making use of~\eqref{def a} and~\eqref{1.3BIS},
\begin{eqnarray*} -L u^-(x)&=&\int_{\R^n} \frac{u^-(x+y)}{|y|^{n+2s}} a\left(\frac{y}{|y|}\right)\,dy\\ &\le&
\int_{\R^n\setminus B_{d_0}}\frac{u^-(x+y)}{|y|^{n+2s}} a\left(\frac{y}{|y|}\right)\,dy \\&\le&
\bar C\Lambda\int_{\R^n\setminus B_{d_0}}\frac{1+|x+y-x_0|^{2s-\delta}}{|y|^{n+2s}}\,dy \\&\le&
\bar C\Lambda\int_{\R^n\setminus B_{d_0}}\frac{1+(|x-x_0|+|y|)^{2s-\delta}}{|y|^{n+2s}}\,dy \\&\le&
\bar C\Lambda\int_{\R^n\setminus B_{d_0}}\frac{1+(R+|y|)^{2s-\delta}}{|y|^{n+2s}}\,dy\\&\le&
2^{2s}\bar C\Lambda\int_{\R^n\setminus B_{d_0}}\frac{1+R^{2s-\delta}+|y|^{2s-\delta}}{|y|^{n+2s}}\,dy,
\end{eqnarray*}
from which the desired result follows.
\end{proof}

\begin{proof}[Proof of Theorem~\ref{thm: hopf}]
Under the assumptions of the theorem, there exists $\bar r \in (0,R/4)$ such that, for every~$r \in (0,\bar r]$, there exists an interior ball $B_r(x_r)$, tangent to $\pa \Omega$ at $x_0$, and such that $B_r(x_r) \subset B_R(x_0) \cap \Omega$. Let~$C$ and $\varphi$ be as in Lemma~\ref{lem: sub}. Also, set
\[
\alpha_r:=\frac1C\,\inf_{B_{r/2}(x_r)} u \qquad{\mbox{and}}\qquad
\psi(x) := \alpha_r \,\varphi\left(\frac{x-x_r}{r}\right).
\]
We observe that, since $u >0$ in $B_R(x_0) \cap \Omega \Supset B_{r/2}(x_r)$, we know that~$\alpha_r>0$.
Then, by Lemma~\ref{lem: sub},
\[
\begin{dcases}
L \psi \le -\frac{\alpha_r}{r^{2s}} & \text{in }B_r(x_r) \setminus \overline{B_{r/2}(x_r)}, \\
\psi \ge \frac{\alpha_r}{r^{2s}} (r^2-|x-x_r|^2)_+^s & \text{in $B_r(x_r)$,} \\
\psi \le \alpha_r C & \text{in }B_{r/2}(x_r), \\
\psi = 0 & \text{in }\R^n \setminus B_r(x_r).
\end{dcases}
\]
Let $w:= \psi-u^-$. Then, clearly, $w \le u$ in $\R^n \setminus B_r(x_r)$. Moreover, in $B_{r/2}(x_r)$, 
$$w=\psi\le \alpha_r C =\inf_{B_{r/2}(x_r)} u
\le u .$$

We now utilize Lemma~\ref{LE9COST-01}
with~${\mathcal{S}}:= B_r(x_r) \setminus \overline{B_{r/2}(x_r)}$.
To this end, we point out that the distance between~$B_r(x_r)$ and~$\partial B_R(x_0)$
is larger than~$R/2$, as long as~$r$ is sufficiently small with respect to~$R$.
Therefore, by Lemma~\ref{LE9COST-01}, for every~$x \in B_r(x_r) \setminus \overline{B_{r/2}(x_r)}$,
$$ -L u^-(x)\le C_\star,$$
with~$C_\star>0$ independent of $r$. Accordingly, for every~$x \in B_r(x_r) \setminus \overline{B_{r/2}(x_r)}$,
\begin{align*} L w(x) = L \psi(x)- L u^-(x)\le  -\frac{\alpha_r}{r^{2s}} + C_{\star}.
\end{align*}
Therefore, for all~$x\in B_r(x_r) \setminus \overline{B_{r/2}(x_r)}$, we have that
\begin{eqnarray*}&&
L(u-w)(x) \ge V(x)u(x)-Lw(x)\ge
\frac{\alpha_r}{r^{2s}} -C_\star  -V^-(x) u^+(x)    \\&&\qquad\ge \frac{\alpha_r}{r^{2s}} -C_\star - \|V^- u^+\|_{L^\infty(B_R(x_0))}
=\frac{\Phi(r)}{C} -C_\star - \|V^- u^+\|_{L^\infty(B_R(x_0))},
\end{eqnarray*} where~$\Phi$ is as in~\eqref{OLOms}.
As a consequence, by~\eqref{OLOms},
we can choose~$r>0$ sufficiently small, such that~$ L(u-w)(x)\ge0$
for all~$x\in B_r(x_r) \setminus \overline{B_{r/2}(x_r)}$.

Summarizing, we have that, for~$r$ small enough (that is now given once and for all), 
\[
\begin{dcases}
L(u-w) \ge 0 & \text{in }B_r(x_r) \setminus \overline{B_{r/2}(x_r)} ,\\
u-w \ge 0 & \text{in }\R^n \setminus \big(B_r(x_r) \setminus \overline{B_{r/2}(x_r)}\big),
\end{dcases}
\]
and the comparison principle yields that 
\begin{equation}\label{PI:0-1}
{\mbox{$u \ge w = \psi$ in $B_r(x_r) \setminus \overline{B_{r/2}(x_r)}$.}}\end{equation}

We can thus complete the proof of Theorem~\ref{thm: hopf} by combining~\eqref{PI:0-1}
and a geometric argument, reasoning as follows. We let~$\bar\nu$
be the vector joining~$x_0$ to the center of the interior sphere (note that~$\bar\nu$ is pointing ``inward'' the domain~$\Omega$).
Given~$\beta\in(0,\pi/2)$, we consider the set~${\mathcal{C}}_\beta$ of points~$x$ for
which the angle between~$x-x_0$ and~$\bar\nu$ is smaller than~$\pi/2-\beta$, that is
\begin{equation}\label{CBETA} {\mathcal{C}}_\beta:=\left\{ x\in\Omega{\mbox{ s.t. }} \frac{x-x_0}{|x-x_0|}\cdot\bar\nu>c_\beta
\right\},\qquad{\mbox{where }}\,c_\beta:=\cos\left(\frac\pi2-\beta\right)>0.\end{equation}
With this notation, to prove~\eqref{HOP-0ok} we have to check that
\begin{equation}\label{HOP-0ok-inp}
\liminf_{{\mathcal{C}}_\beta\ni x \to x_0} \frac{u(x)}{|x-x_0|^s} \ge c,
\end{equation}
for an appropriate~$c>0$ as in the statement of Theorem~\ref{thm: hopf}.

Thus, to establish~\eqref{HOP-0ok-inp}, we take a sequence of points~$x_k\in{\mathcal{C}}_\beta$ such that~$x_k\to x_0$
as~$k\to+\infty$ and we claim that, for~$k$ large enough,
\begin{equation}\label{HOP-0ok-inp2}
x_k\in B_{r}(x_{r})\setminus \overline{B_{r/2}(x_{r})}.
\end{equation}
Indeed, on the one hand,
\begin{eqnarray*}
&&|x_k-x_r|^2=\big|x_k-(x_0+r\bar\nu)\big|^2=
\big|(x_k-x_0)-r\bar\nu\big|^2
= |x_k-x_0|^2+r^2-2r(x_k-x_0)\cdot\bar\nu\\&&\qquad
=r^2-|x_k-x_0| \left(\frac{2r(x_k-x_0)\cdot\bar\nu}{|x_k-x_0|}-|x_k-x_0|\right)\le
r^2-|x_k-x_0| \big(2 c_\beta r-|x_k-x_0|\big)\\&&\qquad\le
r^2-c_\beta r|x_k-x_0|<r^2,
\end{eqnarray*}
as long as~$k$ is sufficiently large,
whence~$x_k\in B_{r}(x_{r})$.

On the other hand,
\begin{eqnarray*}
&&|x_k-x_{r}|\ge |x_0-x_r|-|x_k-x_0|=r-|x_k-x_0|>\frac{r}2,
\end{eqnarray*}
as long as~$k$ is large enough,
therefore~$x_k\not\in\overline{B_{r/2}(x_{r})}$.

The proof of~\eqref{HOP-0ok-inp2} is thereby complete.

Now, owing to~\eqref{HOP-0ok-inp2}, we are in the position of applying~\eqref{PI:0-1}, thus finding that
\begin{eqnarray*}&&
u (x_k)\ge \psi(x_k)\ge\frac{\alpha_r}{r^{2s}} \big(r^2-|x_k-x_{r}|^2\big)_+^s=
\frac{\alpha_r}{r^{2s}} \big(r^2-\big|(x_k-x_0)-r\bar\nu\big|^2\big)_+^s\\&&\qquad
=\frac{\alpha_r}{r^{2s}} \big(
2r(x_k-x_0)\cdot\bar\nu-|x_k-x_0|^2\big)_+^s\ge
\frac{\alpha_r}{r^{2s}} \big(
2c_\beta r|x_k-x_0|-|x_k-x_0|^2\big)_+^s
\end{eqnarray*}
and consequently
$$ \liminf_{k\to+\infty}
\frac{u(x_k)}{|x_k-x_0|^s}\ge
\liminf_{k\to+\infty}\frac{\alpha_r}{r^{2s}} \big(
2c_\beta r-|x_k-x_0|\big)_+^s=\frac{2^s\alpha_r\, c_\beta^s}{r^{s}}.$$
This completes the proof of~\eqref{HOP-0ok-inp}, as desired.
\end{proof}

\section{The necessity of additional conditions and proof of Theorem~\ref{thm: hopf:NO}}\label{sec3ugrihvrkjegi}

\begin{proof}[Proof of Theorem~\ref{thm: hopf:NO}]
We exploit~\cite[Theorem~3.1]{MR3626547} and we find~$r_2>r_1>0$ and a continuous function~$v$
such that
\begin{eqnarray*}
&& (-\Delta)^s v=0 {\mbox{ in }}B_{r_1},\\
&& v=0{\mbox{ in }}\R^n\setminus B_{r_2},\\
&& D^\gamma v(0)=0 {\mbox{ for all $\gamma=(\gamma_1,\dots,\gamma_n)\in\N^n$ such that~$|\gamma|:=\gamma_1+\dots+\gamma_n\le k$
and~$\gamma_1\ne k$}}\\ {\mbox{and }}
&& \partial^k_{x_1} v(0)=k!.\end{eqnarray*}
We take~$\mu:= r_1/4$ and define
$$ w(x):=\frac{v(\mu( x-x_0))}{\mu^k}.$$
Note that~$w$ is compactly supported, since so is~$v$. Additionally, if~$x\in B_2$ then~$|\mu( x-x_0)|\le \mu|x|+\mu|x_0|<3\mu\le r_1$
and therefore, for all~$x\in B_2$,
$$(-\Delta)^s w(x)=\mu^{2s-k}(-\Delta)^s v(\mu( x-x_0))=0 .$$

Also, $D^\gamma w(x)=\mu^{|\gamma|-k}D^\gamma v(\mu( x-x_0))$, from which we obtain that~$D^\gamma w(x_0)=0$ for all $\gamma\in\N^n$ such that~$|\gamma|\le k$
and~$\gamma_1\ne k$ and~$\partial^k_{x_1} w(x_0)=k!$.
Hence, a Taylor expansion gives that,
in a neighborhood of~$x_0$,
\begin{eqnarray*}
w(x)= \big((x-x_0)\cdot e_1\big)^k +O\big(|x-x_0|^{k+1}\big)
.\end{eqnarray*}
Now, for each~$j\in\{1,\dots,n\}$ we denote by~$w_j$ the function obtained by the previous procedure
when the variable~$x_1$ is replaced by~$x_j$ (of course, $w_1=w$) and we set~$u:=\frac{w_1+\dots+w_n}{n}$.
In this way, we have that~$(-\Delta)^s u=0$ in~$B_2$, $u$ is compactly supported and
\begin{eqnarray*}
u(x)=\sum_{j=1}^n\big((x-x_0)\cdot e_j\big)^k +O\big(|x-x_0|^{k+1}\big)
.\end{eqnarray*}
In particular, recalling that~$k$ is even (i.e., $k=2\ell$, with~$\ell\in\N\cap[1,+\infty)$), if~$\varrho>0$ is small enough,
$t\in(0,\varrho]$ and~$\omega\in\partial B_1$ then
\begin{eqnarray*}&& u(x_0+t\omega)=
\sum_{j=1}^n(t\omega_j)^k +O(t^{k+1})=
t^{2\ell} \left( \sum_{j=1}^n \omega_j^{2\ell} +O(t)\right)\ge
\frac{t^{2\ell}}{n^{\ell}}\left( \left(\sum_{j=1}^n \omega_j^{2}\right)^\ell +O(t)\right)\\&&\qquad\qquad\qquad\qquad\ge
\frac{t^{2\ell}}{n^{\ell}}\left( 1+O(\varrho)\right)\ge\frac{t^{2\ell}}{2n^{\ell}}>0.
\end{eqnarray*}
Besides,
$$ \limsup_{x\to x_0}\frac{u(x)}{|x-x_0|^k}=\limsup_{x\to x_0}\sum_{j=1}^n\frac{\big((x-x_0)\cdot e_j\big)^k}{|x-x_0|^k}
=\limsup_{y=(y_1,\dots,y_n)\to0}\frac{y_1^k+\dots+y_n^k}{|y|^k}=1$$
and the proof of Theorem~\ref{thm: hopf:NO} is thereby complete.
\end{proof}

\section{Radial solutions for the Dirichlet problem and proof of Theorem~\ref{thm: not inf many zeros}}\label{sec: non ex}

\begin{proof}[Proof of Theorem~\ref{thm: not inf many zeros}]
Without loss of generality, we can suppose that $\rho=1$.
 
In what follows, we will denote by~$\R^{n+1}_+:=\R^n \times (0,+\infty)$ and points~$z=(x,y)\in\R^{n+1}_+$. When needed, we will also
use the notation $z=(x_1,x',y) \in \R \times \R^{n-1} \times (0,+\infty)$ for points in~$\R^{n+1}_+$.
Additionally, balls centered at the origin with radius~$r$ in~$\R^{n+1}$ will be denoted by~${\mathcal{B}}_r$ and
$ {\mathcal{B}}_r^+:={\mathcal{B}}_r\cap \R^{n+1}_+$.

We argue by contradiction and suppose that~\eqref{radial pb} has a non-trivial radial bounded solution $u(x) = \tilde u(|x|)$, with infinitely many interior zeros accumulating at $1$: namely, we suppose that there exists a
sequence~$\{\rho_m\} \subset \R^+$ such that $\rho_m \to 1^-$ and $u|_{\pa B_{\rho_m}} = 0$ for every~$m$. It is convenient to rewrite~$\rho_m= (1+r_m)/2$, with~$r_m \to 1^-$.

Let us consider now the Caffarelli-Silvestre extension of $u(x)$, denoted by $U(z)$, with~$z=(x,y)\in\R^{n+1}_+$.
Since~$u$ is a solution of~\eqref{radial pb}, exploiting~\cite{CafSil}, we see that $U$ solves
\[
\begin{dcases}
\mathrm{div}(y^{1-2s}\nabla U) = 0 & \text{in $\R^{n+1}_+$,} \\
U = u & \text{on $\R^n \ \times \{0\}$,}\\
U = 0 & \text{on $(\R^n \setminus B_1) \times \{0\}$,}\\
-\displaystyle\lim_{y \to 0^+} y^{1-2s} \pa_y U = \kappa_{n,s} V(x) u & \text{on $B_1 \times \{0\}$,}
\end{dcases}
\]
where $\kappa_{n,s}$ is a positive constant depending on $n$ and $s$.

Let $x_0 =(1,0') \in \pa B_1$ and $z_0=(x_0,0)=(1,0',0)$. Then, by \cite[Theorem 1.3]{DLFeVi}, there exists~$k_0 \in \N$ such that the blow-up sequence
\[
U_m(z):= \frac{U(z_0+\lambda_m z)}{\lambda_m^{k_0+s}}, \quad \text{where} \quad \lambda_m := 1-r_m \to 0^+, 
\]
converges to a homogeneous \emph{non-trivial} function $\Psi(z) = |z|^{k_0+s} Y(z/|z|) \not \equiv 0$, in the weighted Sobolev space~$H^1({\mathcal{B}}_1^+, y^{1-2s}\,dz)$, as $m \to +\infty$. More precisely, $\Psi$ satisfies
\[
\begin{dcases}
\mathrm{div}(y^{1-2s}\nabla \Psi) = 0 & \text{in $\R^{n+1}_+$,} \\
\Psi = 0 & \text{in $\Gamma^+:= [0,+\infty) \times \R^{n-1} \times \{0\}$,}\\
\displaystyle\lim_{y \to 0^+} y^{1-2s} \pa_y \Psi =0 & \text{in $\Gamma^-:= (-\infty,0) \times \R^{n-1} \times \{0\}$,}
\end{dcases}
\]
see \cite[Appendix B]{DLFeVi}. 

We claim that
\begin{equation}\label{0702}
\Psi\left(-\frac12,x',0\right) = 0 \quad \text{whenever }|x'|<\frac14.
\end{equation}
To prove this claim, we recall that, by definition of~$r_m$ and~$\lambda_m$, the scaled function~$U_m|_{\{y=0\}}$ vanishes on the scaled sphere
\[
\frac{\partial B_{\frac{1+r_m}2}-(1,0')}{1-r_m} = \pa B_{\frac{1+r_m}{2(1-r_m)}}\left(\left(-\frac1{1-r_m},0'\right)\right) =: S_m.
\]

Now, if $x \in \{x_1=-1/2, |x'|<1/4\}$, then
\begin{equation}\label{addht985679yjhitrhjpotrjhotr}
{\mbox{there exists a sequence~$y_m^x \to x$ as~$m\to+\infty$, with~$y_m^x \in S_m$ for every~$m$.}}
\end{equation}
Indeed, given~$x=(-1/2,x')$ with~$|x'|<1/4$, we set~$y_m^x:=(y_{1,m}, x')$ with
\begin{align*}
y_{1,m} := \left(\frac{1}{1-r_m}\right) \left[ \frac{1+r_m}{2} \sqrt{1-\left(\frac{2|x'|(1-r_m)}{1+r_m}\right)^2}-1\right].\end{align*}
Notice that~$y_{1,m}$ is well-defined, since
$$ \frac{2|x'|(1-r_m)}{1+r_m}< \frac{1-r_m}{2(1+r_m)}\le\frac{1}{2}.
$$
We observe that~$y_m^x\in S_m$, since
\begin{eqnarray*}&&
\left| y_m^x-\left(-\frac1{1-r_m},0'\right)\right|=\left|\left(
\frac{1+r_m}{2(1-r_m)} \sqrt{1-\left(\frac{2|x'|(1-r_m)}{1+r_m}\right)^2}, x'\right)\right|\\&&\qquad\qquad\quad
=\sqrt{ \left(\frac{1+r_m}{2(1-r_m)}\right)^2\left( 1-\left(\frac{2|x'|(1-r_m)}{1+r_m}\right)^2\right)+|x'|^2}
=\frac{1+r_m}{2(1-r_m)}.
\end{eqnarray*}
Furthermore, we have that~$y_m^x\to x$ as~$m\to+\infty$, since
\begin{eqnarray*}
\left|y_{1,m}+\frac12\right| &=& \left|\left(\frac{1}{1-r_m}\right) \left[ \frac{1+r_m}{2} \sqrt{1-\left(\frac{2|x'|(1-r_m)}{1+r_m}\right)^2}-1\right]+\frac12\right|\\&=&\frac{1+r_m}2\cdot \frac{1-\sqrt{1-\left(\frac{2|x'|(1-r_m)}{1+r_m}\right)^2}}{1-r_m}\\&=&
\frac{ 2|x'|^2(1-r_m)}{(1+r_m)\left(1+\sqrt{1-\left(\frac{2|x'|(1-r_m)}{1+r_m}\right)^2}\right)},
\end{eqnarray*}
which converges to~$0$ as~$m\to+\infty$. This completes the proof of~\eqref{addht985679yjhitrhjpotrjhotr}.

Now, note that each $U_m$ solves 
\[
\begin{dcases}
\mathrm{div}(y^{1-2s}\nabla U_m) = 0 & \text{in $\R^{n+1}_+$,} \\
U_m = 0 & \text{on $\left(\R^n \setminus B_{1/\lambda_m}\left(\left(-\frac1{\lambda_m},0'\right)\right) \right)\times \{0\}$,}\\
-\displaystyle\lim_{y \to 0^+} y^{1-2s} \pa_y U_m = \kappa_s \lambda_m^{2s} V(x_0+\lambda_m x ) U_m(x,0) & \text{on $B_{1/\lambda_m}\left(\left(-\frac1{\lambda_m},0'\right)\right) \times \{0\}$.}
\end{dcases}
\]
Moreover, if~$x\in B_{1/3}\left(-\frac12,0'\right)$,
then
\begin{eqnarray*}
&&\left|x-\left(-\frac1{\lambda_m},0'\right)\right|\le
\left|x-\left(-\frac1{2},0'\right)\right|+\left|\left(\frac1{2},0'\right)-\left(\frac1{\lambda_m},0'\right)\right|
\le\frac13+\left|\frac12-\frac1{\lambda_m}\right|=\frac13+\frac1{\lambda_m}-\frac12<\frac1{\lambda_m},
\end{eqnarray*}
and thus
\[
B_{1/\lambda_m}\left(\left(-\frac1{\lambda_m},0'\right)\right) \times \{0\} \supset B_{1/3}\left( \left(-\frac12,0'\right)\right).
\]
Therefore, taking into account the boundedness of $U_m$ in $H^1({\mathcal{B}}_1^+, y^{1-2s}\,dz)$, Lemma~3.3 in~\cite{FaFe}
ensures that~$\|U_m\|_{C^\alpha(B_{1/3}( (-1/2,0')) \times [0,y_0])}\le C$, for some $y_0 >0$ and~$\alpha\in(0,1)$.
This in turn gives that~$U_m \to \Psi$ locally uniformly in~$B_{1/3}\left( \left(-\frac12,0'\right)\right) \times [0,y_0]$.

Furthermore, recalling~\eqref{addht985679yjhitrhjpotrjhotr}, if~$|x'|<1/4$ and~$m$ is sufficiently large,
\begin{eqnarray*}&&
\left|\Psi \left(-\frac12,x',0\right) - U_m\left(y_m^x,0\right)\right|\le
\left|\Psi \left(-\frac12,x',0\right) - U_m\left(-\frac12,x',0\right)\right|+\left|U_m \left(-\frac12,x',0\right) - U_m\left(y_m^x,0\right)\right|\\
&&\qquad \le\left|\Psi \left(-\frac12,x',0\right) - U_m\left(-\frac12,x',0\right)\right|+
\|U_m\|_{C^\alpha(B_{1/3}( (-1/2,0')) \times [0,y_0])}
\left|\left(-\frac12,x',0\right) - \left(y_m^x,0\right)\right|^\alpha\\
&&\qquad \le\left|\Psi \left(-\frac12,x',0\right) - U_m\left(-\frac12,x',0\right)\right|+
C\left|\left(-\frac12,x'\right) - y_m^x\right|^\alpha .
\end{eqnarray*}
{F}rom this, we obtain that, if~$|x'|<1/4$,
$$ \Psi \left(-\frac12,x',0\right) = \lim_{m} U_m\left(y_m^x,0\right).$$

Thus, since~$U_m|_{\{y=0\}}$ vanishes on the scaled sphere~$S_m$ and~$y_m^x\in S_m$, we conclude that
\[
\Psi \left(-\frac12,x',0\right) = \lim_{m} U_m\left(y_m^x,0\right) = 0 \qquad \text{for }|x'|<\frac14,
\]
which proves the claim in~\eqref{0702}. 

We now complete the proof of Theorem~\ref{thm: not inf many zeros} as follows.
Since~$\Psi$ is homogeneous with respect to $0$, equation~\eqref{0702} entails that $\Psi$ vanishes in an open cone of $\Gamma^-$, containing the ball $B_{1/4}(-1/2,0') \times \{0\} \subset \R^n \times \{0\}$. 

In particular,
\[
\begin{dcases}
\mathrm{div}(y^{1-2s}\nabla \Psi) = 0 & \text{in }\R^{n+1}_+ ,\\
\Psi = 0 & \text{on }B_{1/4}(-1/2,0') \times \{0\}, \\
\displaystyle\lim_{y \to 0^+} y^{1-2s} \pa_y \Psi = 0 & \text{on }B_{1/4}(-1/2,0') \times \{0\}.
\end{dcases}
\] 
By the unique continuation principle proved in \cite[Proposition 2.2]{Ru}, this gives that $\Psi \equiv 0$ in the extended ball ${\mathcal{B}}_{1/4}^+(-1/2,0',0) $. Therefore, the standard unique continuation principle gives that~$\Psi \equiv 0$ in~$\R^{n+1}_+$. This is the desired contradiction, and the proof is thereby complete.
\end{proof}

\section{Proof of Theorem~\ref{INU10}}\label{INU10S}

This part is devoted to the proof of Theorem~\ref{INU10},
relying also on the following boundary expansion of the fractional Green function of the ball
(see e.g.~\cite[Lemma~6]{MR3935264} for related results).

\begin{lemma} \label{TAYGRE} Let~$e\in\partial B_1$.
Let~$G$ be the fractional Green function of the ball of radius~$\rho$. Let~$\rho_0\in(0,\rho)$.

Then, for~$z\in B_{\rho_0}$, for small~$\delta>0$ we have that
$$ G((\rho-\delta) e,z)=a_0(z,e)\,\delta^s+o(\delta^{s}),$$
where
\begin{equation}\label{qoj1sw1dk2n234}\begin{split}
a_0(z,e):=\frac{2^s(\rho^2-|z|^2)^s}{s\rho^s\,|\rho e-z|^n},\end{split}\end{equation}
up to normalization constants that we omit, where~$o(\delta^{s})$ is uniform in~$z\in B_{\rho_0}$.
\end{lemma}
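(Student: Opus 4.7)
The plan is to start from the explicit Riesz-type representation for the Green function of $(-\Delta)^s$ on the ball $B_\rho$: up to the normalization constant that the statement allows us to omit,
\[
G(x,z) = |x-z|^{2s-n} \int_0^{R(x,z)} \frac{t^{s-1}}{(t+1)^{n/2}}\, dt,
\qquad
R(x,z) := \frac{(\rho^2-|x|^2)(\rho^2-|z|^2)}{\rho^2\,|x-z|^2}.
\]
Substituting $x=(\rho-\delta)e$ reduces everything to a short asymptotic analysis as $\delta\to 0^+$.

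First I would compute the basic expansions of the quantities appearing in $R$. Writing $|x|^2 = (\rho-\delta)^2$ gives $\rho^2-|x|^2 = 2\rho\delta - \delta^2 = 2\rho\delta\bigl(1+O(\delta)\bigr)$. Since $|\rho e|=\rho > \rho_0$, for $z\in B_{\rho_0}$ the distance $|\rho e - z|$ is bounded below by $\rho-\rho_0>0$, hence
\[
|x-z|^2 = |\rho e - z|^2 - 2\delta\,e\cdot(\rho e - z) + \delta^2 = |\rho e - z|^2 + O(\delta),
\]
with $O(\delta)$ uniform in $z\in B_{\rho_0}$. Combining these,
\[
R(x,z) = \frac{2\delta(\rho^2-|z|^2)}{\rho\,|\rho e - z|^2}\,\bigl(1+O(\delta)\bigr),
\]
again uniformly in $z\in B_{\rho_0}$; in particular $R(x,z)\to 0$ uniformly.

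Next I would expand the inner integral for small upper limit. Taylor-expanding $(t+1)^{-n/2}=1+O(t)$ near $t=0$ gives
\[
\int_0^{R}\frac{t^{s-1}}{(t+1)^{n/2}}\,dt = \frac{R^s}{s} + O(R^{s+1})
= \frac{R^s}{s}\bigl(1+O(R)\bigr),
\]
which applies to $R=R(x,z)$ for all sufficiently small $\delta$, uniformly in $z\in B_{\rho_0}$.

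Finally I would assemble the pieces. Using the expansion of $|x-z|^{2s-n} = |\rho e - z|^{2s-n}\bigl(1+O(\delta)\bigr)$ together with the two displays above, one obtains
\[
G\bigl((\rho-\delta)e,z\bigr) = \frac{|\rho e - z|^{2s-n}}{s}\cdot\frac{2^s\delta^s(\rho^2-|z|^2)^s}{\rho^s|\rho e - z|^{2s}}\,\bigl(1+O(\delta)\bigr) = a_0(z,e)\,\delta^s + o(\delta^s),
\]
with the little-$o$ uniform in $z\in B_{\rho_0}$. There is no conceptual obstacle here: the entire argument is an elementary Taylor expansion of a closed-form expression. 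The only point requiring mild care is ensuring uniformity in $z$, and this is automatic because $|\rho e - z|$ stays bounded away from $0$ on $B_{\rho_0}$, so every remainder term produced along the way is controlled by a constant independent of $z$.
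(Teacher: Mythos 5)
Your argument is essentially the same as the paper's: start from the explicit Green-function representation $G(x,z)=|x-z|^{2s-n}\int_0^{r_0(x,z)}\frac{t^{s-1}}{(t+1)^{n/2}}\,dt$ with $r_0$ as you define $R$, expand $r_0((\rho-\delta)e,z)=b_1\delta+O(\delta^2)$ for a $z$-dependent coefficient $b_1$, expand the inner integral near zero as $\frac{r_0^s}{s}(1+O(r_0))$, and assemble. Your observation that $|\rho e - z|$ is bounded below on $B_{\rho_0}$, giving uniform error control, is exactly the point the paper exploits as well. The paper's calculation of the integral expansion is carried out a bit more explicitly (it records the first correction term $-\frac{n}{2(1+s)}\eta^{\frac{1+s}{s}}$ before discarding it), but the leading-order conclusion is identical.

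The one place the paper does more: it treats the case $n=2s$ (that is, $n=1$, $s=\tfrac12$) separately, replacing the representation above with the logarithmic form of the Green function (cf.\ \cite[eq.~(1.19)]{MR3461641}), precisely because the representation in \cite[eq.~(1.17)]{MR3461641} is stated only for $n\neq 2s$. You tacitly apply the integral representation in all cases. In fact the leading-order asymptotics you derive do match in the $n=2s$ case (the integral $\int_0^{r_0}\frac{t^{-1/2}}{(t+1)^{1/2}}\,dt$ evaluates to $2\log(\sqrt{r_0}+\sqrt{r_0+1})$, which is $2\sqrt{r_0}+O(r_0)$ as $r_0\to0$, consistent with $\frac{r_0^s}{s}(1+O(r_0))$ at $s=\tfrac12$), so the conclusion is not endangered; but as written you are invoking a formula outside the range for which your cited source states it, and a complete proof should either verify the representation holds there or, as the paper does, switch to the logarithmic form and redo the short expansion.
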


We also recall the interior blow-up behavior of the fractional Green function:

\begin{lemma}\label{Nmm-POJL} Let~$n\ne2s$.
For every~$e$, $y\in\overline B_1$,
$$ \lim_{\rho_0\searrow0} \rho_0^{n-2s} G(\rho_0 e,\rho_0 y)=|e-y|^{2s-n},$$
up to a dimensional constant that we omit.
\end{lemma}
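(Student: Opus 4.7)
The plan is to rely on the classical Riesz--Boggio explicit formula for the fractional Green function of a ball, namely
\begin{equation*}
G(x,z) \;=\; \frac{\kappa_{n,s}}{|x-z|^{n-2s}}\int_0^{r(x,z)}\frac{t^{s-1}}{(t+1)^{n/2}}\,dt, \qquad r(x,z) := \frac{(\rho^2-|x|^2)(\rho^2-|z|^2)}{\rho^2\,|x-z|^2},
\end{equation*}
valid for $x,z\in B_\rho$ with a dimensional constant $\kappa_{n,s}>0$. First I would substitute $x=\rho_0 e$ and $z=\rho_0 y$ with $e,y\in\overline{B_1}$ distinct. Since $|x-z|=\rho_0|e-y|$, the prefactor contributes exactly a factor $\rho_0^{2s-n}$, so
\begin{equation*}
\rho_0^{n-2s}\,G(\rho_0 e,\rho_0 y) \;=\; \kappa_{n,s}\,|e-y|^{2s-n}\int_0^{r(\rho_0 e,\rho_0 y)}\frac{t^{s-1}}{(t+1)^{n/2}}\,dt,
\end{equation*}
which already identifies the correct spatial profile $|e-y|^{2s-n}$ and reduces the statement to controlling the integral factor.

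Next I would analyze the upper endpoint. Since $e,y$ are fixed in $\overline{B_1}$, as $\rho_0\searrow 0$ one has $\rho^2-|\rho_0 e|^2,\ \rho^2-|\rho_0 y|^2\to\rho^2$, while $\rho^2|x-z|^2=\rho^2\rho_0^2|e-y|^2\to 0$, whence $r(\rho_0 e,\rho_0 y)\to+\infty$. Under the hypothesis $n\ne 2s$, which in the regime of the Riesz power kernel amounts to $n>2s$, we have $n/2>s$, so the integrand $t^{s-1}(t+1)^{-n/2}$ decays like $t^{s-1-n/2}$ at infinity and lies in $L^1(0,+\infty)$. Being nonnegative, monotone convergence yields
\begin{equation*}
\int_0^{r(\rho_0 e,\rho_0 y)}\frac{t^{s-1}}{(t+1)^{n/2}}\,dt\;\longrightarrow\; B\!\left(s,\tfrac{n}{2}-s\right)\in(0,+\infty).
\end{equation*}
Combining this with the previous display gives $\rho_0^{n-2s}\,G(\rho_0 e,\rho_0 y)\to \kappa_{n,s}\,B(s,n/2-s)\,|e-y|^{2s-n}$, which is exactly the claimed asymptotics up to the dimensional constant the statement chooses not to track.

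The argument is genuinely just scaling plus a monotone-convergence computation, so there is no serious technical obstacle. The only conceptual point worth flagging is the role of the hypothesis $n\ne 2s$: this is precisely the critical exponent at which the $t$-integral $\int_0^\infty t^{s-1}(t+1)^{-n/2}\,dt$ ceases to converge, producing a logarithmic rather than a pure power law, so excluding this case is both natural and necessary for the limit as stated.
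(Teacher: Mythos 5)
Your proof is essentially the paper's: both invoke the Boggio/Riesz explicit representation of the fractional Green function of the ball, peel off the factor $\rho_0^{2s-n}$ from $|x-z|^{2s-n}$ under the scaling $x=\rho_0 e$, $z=\rho_0 y$, observe that the anharmonic ratio $r(\rho_0 e,\rho_0 y)\to+\infty$ as $\rho_0\searrow0$, and pass to the limit in the $t$-integral; you are a bit more explicit in invoking monotone convergence and identifying the limiting constant as a Beta value, which is a welcome (if small) refinement over the paper's terser ``we conclude''.

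One caveat is worth flagging: you write that the hypothesis $n\ne 2s$ ``amounts to $n>2s$''. That is automatic when $n\ge 2$, but the case $n=1$ with $s>1/2$ gives $n<2s$, and there $\int_0^{\infty}t^{s-1}(t+1)^{-n/2}\,dt$ diverges, so the scaled limit is $+\infty$ rather than a finite dimensional multiple of $|e-y|^{2s-n}$. The paper's proof shares this blind spot (it writes the limiting integral as $\int_0^{+\infty}$ without comment), and the lemma is ultimately applied only in a ratio where the constant cancels, but your monotone-convergence step is rigorous precisely when $n>2s$, so state that restriction rather than equating it with $n\ne 2s$.
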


For the reader's facility, we postpone the technical proofs of Lemmata~\ref{TAYGRE} and~\ref{Nmm-POJL} to Appendix~\ref{TAYGRE-A}.
Now we prove Theorem~\ref{INU10}.

\begin{proof}[Proof of Theorem~\ref{INU10}]
Let~$\e\in(0,1)$, to be taken appropriately small in what follows.
Let also~$\rho_0\in(0,\rho)$ and~$\phi\in C^\infty_c((-1,1),[0,+\infty))$ with~$\phi$ not identically zero.

Let
$$ \phi_{1,\e}(x):=\frac1{\e^n}\phi\left(\frac{|x|}\e\right)\qquad
{\mbox{and}}\qquad\phi_{2,\e}(x):=\frac1{\e}\phi\left(
\frac{|x|-\rho_0}{\e}\right).$$

We consider the solutions~$u_{1,\e}$ and~$u_{2,\e}$ of the problem
\begin{equation}\label{PKlms-1203o4rt}
\begin{cases}
(-\Delta)^s u_{j,\e} = \phi_{j,\e} & \text{in $B_\rho$}, \\
u_{j,\e} = 0 & \text{in $\R^n \setminus B_\rho$},
\end{cases}
\end{equation}
with~$j\in\{1,2\}$.

We remark that the functions~$u_{j,\e}$ are radial. To check this, one can observe that the Green function~$G(x,y)$ of the ball~$B_\rho$ is invariant under rotations (that is~$G(Rx,Ry)=G(x,y)$ for every rotation~$R$, as can be checked
from~\cite[Theorem~3.1]{MR3461641}) and then deduce that~$u_{j,\e}$ is radial due to the Green function representation (see~\cite[Theorem 3.2]{MR3461641}).

Using the Green function, one can also check that~$u_{j,\e}>0$ in~$B_\rho$. Also, by the fractional Hopf Lemma (see e.g.~\cite{RosOtsur}), we have that
$$ c_{j,\e}:=\lim_{B_\rho\ni x\to\partial B_\rho}\frac{u_{j,\e}}{d^s}(x)>0.$$
We need now to extract some more quantitative information and we will do so via Lemma~\ref{TAYGRE},
which entails that, for all~$e\in\partial B_1$,
\begin{equation*}\begin{split}
c_{1,\e}&=\lim_{\delta\searrow0}\frac{u_{1,\e}((\rho-\delta)e)}{\delta^s}\\&=\lim_{\delta\searrow0}\frac{1}{\delta^s}
\int_{B_\rho} \phi_{1,\e}(y)\,G((\rho-\delta)e,y)\,dy\\
&=\lim_{\delta\searrow0}\frac{1}{\delta^s\,\e^n}
\int_{B_\rho} \phi\left(\frac{|y|}\e\right)\,G((\rho-\delta)e,y)\,dy\\&=\lim_{\delta\searrow0}\frac{1}{\delta^s}
\int_{B_1} \phi(|w|)\,G((\rho-\delta)e,\e w)\,dw\\&=\lim_{\delta\searrow0}\frac{1}{\delta^s}
\int_{B_1} \phi(|w|)\,\Big( a_0(\e w,e)\,\delta^s+o(\delta^s)\Big)\,dw\\&=
\int_{B_1} \frac{2^s\,\phi(|w|)\,(\rho^2-\e^2|w|^2)^s}{s \rho^s\,|\rho e-\e w|^n}\,dw
\end{split}
\end{equation*}
and that
\begin{equation*}\begin{split}
c_{2,\e}&=\lim_{\delta\searrow0}\frac{u_{2,\e}((\rho-\delta)e)}{\delta^s}\\&=\lim_{\delta\searrow0}\frac{1}{\delta^s}
\int_{B_\rho} \phi_{2,\e}(y)\,G((\rho-\delta)e,y)\,dy\\
&=\lim_{\delta\searrow0}\frac{1}{\delta^s\,\e}
\int_{B_\rho} \phi\left(\frac{|y|-\rho_0}\e\right)\,G((\rho-\delta)e,y)\,dy\\&=\lim_{\delta\searrow0}\frac{1}{\delta^s\,\e}
\iint_{(\partial B_1)\times(0,\rho)} t^{n-1} \phi\left(\frac{t-\rho_0}\e\right)\,G((\rho-\delta)e,t\omega)\,dH^{n-1}_\omega\,dt\\&=\lim_{\delta\searrow0}\frac{1}{\delta^s}
\iint_{(\partial B_1)\times(-1,1)} (\rho_0+\e\tau)^{n-1} \phi(\tau)\,G((\rho-\delta)e,(\rho_0+\e\tau)\omega)\,dH^{n-1}_\omega\,d\tau\\&=\lim_{\delta\searrow0}\frac{1}{\delta^s}
\iint_{(\partial B_1)\times(-1,1)} (\rho_0+\e\tau)^{n-1} \phi(\tau)\,\Big(
a_0((\rho_0+\e\tau)\omega,e)\,\delta^s+o(\delta^s) \Big)\,dH^{n-1}_\omega\,d\tau\\&=
\iint_{(\partial B_1)\times(-1,1)} 
\frac{2^s\,(\rho_0+\e\tau)^{n-1} \,(\rho^2-|(\rho_0+\e\tau)\omega|^2)^s\,\phi(\tau)}{s\rho^s\,|\rho e-(\rho_0+\e\tau)\omega|^n}\,dH^{n-1}_\omega\,d\tau.
\end{split}
\end{equation*}

These observations entail that
\begin{equation}\label{C1skdmc} \lim_{\e\searrow0}c_{1,\e}=
\int_{B_1} \frac{2^s \phi(|w|)\,\rho ^{s-n}}s \,dw=:c_1
\end{equation}
and
\begin{equation}\label{C2skdmc} \lim_{\e\searrow0}c_{2,\e}=\iint_{(\partial B_1)\times(-1,1)} 
\frac{ 2^s\rho_0^{n-1} \,(\rho^2-\rho_0^2)^s\,\phi(\tau)}{s\rho^s\,|\rho e-\rho_0\omega|^{n}}
\,dH^{n-1}_\omega\,d\tau=:c_2.\end{equation}

We stress that~$c_1$, $c_2\in(0,+\infty)$ thus, assuming that~$\e$ is conveniently small, we can suppose that
\begin{equation*} c_{1,\e}\in\left(\frac{c_1}2,2c_1\right)\qquad{\mbox{and}}\qquad c_{2,\e}\in\left(\frac{c_2}2,2c_2\right).\end{equation*}

Now we define
$$ u_\e:= c_{2,\e} \, u_{1,\e}-c_{1,\e}\, u_{2,\e}.$$
In this way,
\begin{equation}\label{dj3i547v68cib7t5yghvnckmlsx}
\lim_{B_\rho\ni x\to\rho e_1}\frac{u_\e}{d^s}(x)=\lim_{B_\rho\ni x \to\rho e_1}\frac{c_{2,\e} \, u_{1,\e}(x)-c_{1,\e}\, u_{2,\e}(x)}{d^s(x)}=c_{2,\e} \, c_{1,\e}-c_{1,\e}\, c_{2,\e}=0.
\end{equation}

Furthermore, we claim that, if~$\e$ is small enough,
\begin{equation}\label{SUPPO-1}
\{u_\e=0\}\cap \supp \phi_{1,\e}=\varnothing
\end{equation}
and
\begin{equation}\label{SUPPO-2}
\{u_\e=0\}\cap \supp \phi_{2,\e}=\varnothing.
\end{equation}
We postpone the proofs of these claims at the end and we now complete the proof of Theorem~\ref{INU10},
by assuming the validity of these claims.

Thanks to~\eqref{SUPPO-1} and~\eqref{SUPPO-2}, we can define
$$ V_{j,\e}:=\frac{\phi_{j,\e}}{u_\e},$$
with the implicit notation that~$V_{j,\e}=0$ outside the support of~$\phi_{j,\e}$. 
Let also
$$ V_\e:=c_{2,\e}V_{1,\e}-c_{1,\e}V_{2,\e}.$$
Thus, it follows from~\eqref{PKlms-1203o4rt} that, in~$B_\rho$,
$$ (-\Delta)^s u_\e=c_{2,\e}\phi_{1,\e}-c_{1,\e}\phi_{2,\e}=(c_{2,\e}V_{1,\e}-c_{1,\e}V_{2,\e})u_\e=V_\e u_\e.$$
Additionally, $u_\e$ does not vanish identically, as a byproduct of~\eqref{SUPPO-1}, and it does not change sign in a neighborhood of~$\partial B_\rho$, owing to Theorem~\ref{thm: not inf many zeros}.
In this way, recalling also~\eqref{dj3i547v68cib7t5yghvnckmlsx},
the function~$u:=u_\e$ satisfies all the desired requirements in 
Theorem~\ref{INU10}.

It remains to prove~\eqref{SUPPO-1} and~\eqref{SUPPO-2}. For this,
we will use that
\begin{equation}\label{TIJSDttTJMS}
\begin{split}
u_\e(x)&=c_{2,\e} \, u_{1,\e}(x)-c_{1,\e}\, u_{2,\e}(x)\\&=\frac{
c_{2,\e}}{\e^n}\int_{B_\rho}\phi\left(\frac{|y|}\e\right)\,G(x,y)\,dy-\frac{c_{1,\e}}{\e}\int_{B_\rho}\phi\left(\frac{|y|-\rho_0}{\e}\right)\,G(x,y)\,dy\\&=
c_{2,\e}\int_{B_1}\phi(|w|)\,G(x,\e w)\,dw-
\frac{c_{1,\e}}{\e}\iint_{(\partial B_1)\times(0,\rho)}
t^{n-1} \phi\left(\frac{t-\rho_0}{\e}\right)\,G(x,t\omega)\,dH^{n-1}_\omega\,dt\\
&=c_{2,\e}\int_{B_1}\phi(|w|)\,G(x,\e w)\,dw\\&\qquad\qquad-
c_{1,\e}\iint_{(\partial B_1)\times(-1,1)}
(\rho_0+\e\tau)^{n-1} \phi(\tau)\,G(x,(\rho_0+\e\tau)\omega)\,dH^{n-1}_\omega\,d\tau.
\end{split}
\end{equation}
In the proof of~\eqref{SUPPO-1} and~\eqref{SUPPO-2},
we need to distinguish the case~$s\in\left(0,\frac12\right]$ from the case~$s\in\left(\frac12,1\right)$.\medskip

\noindent{\bf (1). The case~$s\in\left(0,\frac12\right]$.}
Let us start by proving~\eqref{SUPPO-1}. To this end, we argue by contradiction
and we suppose that there exists a point~$p_\e\in\{u_\e=0\}\cap \supp \phi_{1,\e}\subseteq \{u_\e=0\}\cap B_\e$. 

We recall that, for~$|X-Y|$ small (and~$X$ and~$Y$ away from~$\partial B_\rho$), we have (see e.g.~\cite[equations~(1.17) and~(1.19)]{MR3461641})
\begin{equation} \label{GREE:DI}G(X,Y)\ge \begin{cases}
c|X-Y|^{2s-n} & {\mbox{ if }}n>2s,\\
c |\log|X-Y||& {\mbox{ if }}n=2s.
\end{cases}\end{equation}
This and~\eqref{TIJSDttTJMS} give that, for small~$\e$,
\begin{equation}\label{PAKsjldnf201i3roufehg219poihwqfekbjvf98237rty832o9i4gth4epir}
\begin{split}
0&=u_\e(p_\e)\\&\ge
\frac{c_{2}}2\int_{B_1}\phi(|w|)\,G(p_\e,\e w)\,dw-\frac{2c_{1}\|\phi\|_{L^\infty((-1,1))}}{\e}\int_{B_{\rho_0+\e}\setminus B_{\rho_0-\e}}G(p_\e,y)\,dy\\&\ge
\frac{c_{2}}2\int_{B_1}\phi(|w|)\,dw
\,\inf_{X,Y\in B_\e}G(X,Y)-C.
\end{split}\end{equation}
Since, by~\eqref{GREE:DI}, the Green function~$G(X,Y)$ has a singularity when~$X=Y$ (in fact, here we are using only that~$n\ge2s$), it follows that the last term is as large as we wish, and in particular strictly positive.
This is a contradiction and~\eqref{SUPPO-1} is thereby established in this case.

Now we prove~\eqref{SUPPO-2}. For this, we suppose that
there exists a point~$q_\e\in\{u_\e=0\}\cap \supp \phi_{2,\e}\subseteq \{u_\e=0\}\cap B_{\rho_0+\e}\setminus B_{\rho_0-\e}$. 

Hence, recalling~\eqref{TIJSDttTJMS}, for small~$\e$, since~$u_\e$ is radial, for all~$e\in\partial B_1$ we have that
\begin{eqnarray*}
0&=&-u_\e(|q_\e|e)\\&\ge&
\frac{c_{1}}{2} \iint_{(\partial B_1)\times(-1,1)} (\rho_0+\e\tau)^{n-1}\phi(\tau)\,G(|q_\e|e,(\rho_0+\e\tau)\omega)\,dH^{n-1}_\omega\,d\tau
\\&&\qquad
-\frac{2c_{2}\,\|\phi\|_{L^\infty((-1,1))}}{\e^n}\int_{B_\e}\,G(|q_\e|e,y)\,dy\\&\ge&
\frac{c_{1}}{2} \iint_{(\partial B_1)\times(-1,1)} (\rho_0+\e\tau)^{n-1}\phi(\tau)\,G(|q_\e|e,(\rho_0+\e\tau)\omega)\,dH^{n-1}_\omega\,d\tau-C.
\end{eqnarray*}
The contradiction here is obtained\footnote{For simplicity, we wrote~\eqref{ojldnd-23e4r} when~$n\ge2$. Notice that we have used there that~$n-2s\ge n-1$. When~$n=1$, we just obtain
$$ \frac{c\,c_{1}}{2\rho_0^{1-2s}} \int_{-1}^1 \phi(\tau)\,d\tau |\log 0|-C=+\infty.$$}
by  considering an~$\eta\in(0,1)$ sufficiently small and noticing that the limit as~$\e\searrow0$ gives, up to a subsequence, and recalling~\eqref{GREE:DI},
\begin{equation}\label{ojldnd-23e4r}
\begin{split}
0&\ge
\frac{c_{1}\rho_0^{n-1}}{2} \int_{-1}^1 \phi(\tau)\,d\tau \int_{\partial B_1}G(\rho_0 e,\rho_0\omega)\,dH^{n-1}_\omega-C
\\&\ge\frac{c\,c_{1}}{2\rho_0^{1-2s}} \int_{-1}^1 \phi(\tau)\,d\tau \int_{\partial B_1\cap\{|e-\omega|<\eta\}}\,\frac{dH^{n-1}_\omega}{|e-\omega|^{n-2s}}-C\\&=+\infty.\end{split}
\end{equation}
This is a contradiction and the proof of~\eqref{SUPPO-2} is thereby complete in this case.\medskip

\noindent{\bf (2). The case~$s\in\left(\frac12,1\right)$.}
To prove~\eqref{SUPPO-1}, we observe that
the argument in~\eqref{PAKsjldnf201i3roufehg219poihwqfekbjvf98237rty832o9i4gth4epir} goes through whenever~$n\ge2s$ and therefore we focus here on the case~$n<2s$, which gives~$n=1$.
Thus, in this case we use~\eqref{TIJSDttTJMS} and we find that,
if there existed a point~$p_\e\in\{u_\e=0\}\cap \supp \phi_{1,\e}\subseteq \{u_\e=0\}\cap (-\e,\e)$, then 
\begin{eqnarray*}
0&=&
c_{2,\e}\int_{-1}^{1}\phi(|w|)\,G(p_\e,\e w)\,dw\\&&\qquad-
c_{1,\e}\left[\int_{-1}^1(\rho_0+\e\tau)^{n-1} \phi(\tau)\,G(p_\e,\rho_0+\e\tau)\,d\tau
+\int_{-1}^1(\rho_0+\e\tau)^{n-1} \phi(\tau)\,G(p_\e,-\rho_0-\e\tau)\,d\tau
\right].
\end{eqnarray*}
We now send~$\e\searrow0$ and utilize that~$G(0,0)=0$ in this case (see~\cite[equation~(1.17)]{MR3461641}), thus finding that
\begin{eqnarray*}
0=-
c_{1}\left[\int_{-1}^1\rho_0^{n-1} \phi(\tau)\,G(0,\rho_0)\,d\tau
+\int_{-1}^1 \rho_0^{n-1} \phi(\tau)\,G(0,-\rho_0)\,d\tau
\right]<0.
\end{eqnarray*}
This proves~\eqref{SUPPO-1} also in this case.

Now we turn to the proof of~\eqref{SUPPO-2}. Again, we suppose by contradiction that
there exists a point~$q_\e\in\{u_\e=0\}\cap \supp \phi_{2,\e}\subseteq \{u_\e=0\}\cap B_{\rho_0+\e}\setminus B_{\rho_0-\e}$ such that~$u(|q_\e|e)=u(q_\e)=0$, for all~$e\in\partial B_1$.

We recall~\eqref{TIJSDttTJMS} and we write that
\begin{eqnarray*}
0&=& c_{2,\e}\int_{B_1}\phi(|w|)\,G(|q_\e|e,\e w)\,dw\\&&\qquad-
c_{1,\e}\iint_{(\partial B_1)\times(-1,1)}
(\rho_0+\e\tau)^{n-1} \phi(\tau)\,G(|q_\e|e,(\rho_0+\e\tau)\omega)\,dH^{n-1}_\omega\,d\tau.
\end{eqnarray*}
Hence, sending~$\e\searrow0$,
\begin{eqnarray*}
c_{2} \int_{B_1}\phi(|w|)\,dw \,G(\rho_0 e,0)=
c_{1}\iint_{(\partial B_1)\times(-1,1)}
\rho_0^{n-1} \phi(\tau)\,G(\rho_0 e, \rho_0\omega)\,dH^{n-1}_\omega\,d\tau.
\end{eqnarray*}

On this account, substituting for~$c_1$ and~$c_2$ in light of~\eqref{C1skdmc} and~\eqref{C2skdmc}, after a simplification we find that
\begin{equation}\label{RHOSK}
\int_{\partial B_1} 
\frac{(\rho^2-\rho_0^2)^s}{|\rho e-\rho_0\omega|^{n}}
\,dH^{n-1}_\omega\,G(\rho_0 e,0)=\rho ^{2s-n}\int_{\partial B_1} G(\rho_0 e, \rho_0\omega)\,dH^{n-1}_\omega.
\end{equation}
Note that we are still free to modify~$\rho_0$ if needed.

Therefore, taking~$\rho_0$ as small as we wish, we infer from Lemma~\ref{Nmm-POJL} and~\eqref{RHOSK} (in the limit as~$\rho_0\searrow0$) that
\begin{equation}\label{O90}
H^{n-1}(\partial B_1)=\int_{\partial B_1} |e-\omega|^{2s-n}\,dH^{n-1}_\omega.
\end{equation}
When $n=1$, this boils down to
$$ 2= |1-1|^{2s-1}+|1+1|^{2s-1}=2^{2s-1},
$$
which is a contradiction.

Hence, we now deal with the case~$n\ge2$. Choosing~$e:=e_n$ in~\eqref{O90}, we obtain that
\begin{equation}\label{O901}\begin{split}
H^{n-1}(\partial B_1)&=\int_{\partial B_1} |e_n-\omega|^{2s-n}\,dH^{n-1}_\omega\\
&= \int_{\partial B_1\cap\{\omega_n>0\}} |e_n-\omega|^{2s-n}\,dH^{n-1}_\omega+\int_{\partial B_1\cap\{\omega_n<0\}} |e_n-\omega|^{2s-n}\,dH^{n-1}_\omega\\
&= \int_{\partial B_1\cap\{\omega_n>0\}} \Big(|e_n-\omega|^{2s-n}+ |e_n+\omega|^{2s-n}\Big)\,dH^{n-1}_\omega.
\end{split}\end{equation}

Now we look at the function
$$ (0,+\infty)\ni \sigma\mapsto F(\sigma):=\frac{1+\sigma^{2s-n}}{(1+\sigma)^{2s-n}}.$$
We note that
$$ \lim_{\sigma\to0}F(\sigma)=+\infty=\lim_{\sigma\to+\infty}F(\sigma)$$
and
$$ F'(\sigma)= \frac{n-2s}{ \sigma^{n+1-2s}} (1+\sigma)^{n - 2 s - 1} (\sigma^{n + 1-2s} - 1).$$
In particular, $F$ is strictly increasing when~$\sigma>1$ and strictly decreasing when~$\sigma\in(0,1)$, thus exhibiting a minimum at~$\sigma=1$, with~$F(1)=2^{n+1-2s}$.

As a byproduct, for every~$a$, $b>0$, if~$\sigma:=\frac{b}a$,
\begin{eqnarray*}&& a^{2s-n}+b^{2s-n}=a^{2s-n} (1+\sigma^{2s-n})=a^{2s-n}\,(1+\sigma)^{2s-n}\,F(\sigma)\\
&&\qquad\ge
a^{2s-n}\,(1+\sigma)^{2s-n}\,F(1)=2^{n+1-2s}(a+b)^{2s-n},\end{eqnarray*}
with strict inequality unless~$a=b$.

Therefore, choosing~$a:=|e_n-\omega|$ and~$b:=|e_n+\omega|$,
\begin{eqnarray*}&&
|e_n-\omega|^{2s-n}+ |e_n+\omega|^{2s-n}\ge 2^{n+1-2s}\Big(|e_n-\omega|+|e_n+\omega|\Big)^{2s-n}
\\&&\qquad\ge2^{n+1-2s}\Big(|(e_n-\omega)+(e_n+\omega)|\Big)^{2s-n}=
2^{n+1-2s} |2e_n|^{2s-n}=2,
\end{eqnarray*}
with strict inequality when~$\omega_n\ne0$.

For this reason,
$$ \int_{\partial B_1\cap\{\omega_n>0\}} \Big(|e_n-\omega|^{2s-n}+ |e_n+\omega|^{2s-n}\Big)\,dH^{n-1}_\omega> 2H^{n-1}(\partial B_1\cap\{\omega_n>0\})=H^{n-1}(\partial B_1),$$
which provides a contradiction with~\eqref{O901}. The proof of~\eqref{SUPPO-2} is thereby complete in this case as well.
\end{proof}

\begin{appendix}

\section{An interesting example}\label{CONTRO}

Here we observe that the condition~$V\in L^\infty(\Omega)$ in Corollary~\ref{CONTRO-0} cannot be removed, not even in the case of radial and positive solutions. To see this, let us consider
an even function~$u_0\in C(\R)$ such that~$u_0=0$ in~$[1,+\infty)$, $u_0\in C^\infty((-1,1))$ and~$u_0'<0$ in~$(0,1)$. In particular, we have that~$u_0>0$ in~$(-1,1)$. For every~$x\in\R^n$, we define~$u(x):=u_0(|x|)$.

For every~$x\in B_1$, we also define~$g(x):=(-\Delta)^su(x)$ and we stress that~$g\in C^\infty(B_1)$
(but it may become unbounded at~$\partial B_1$). Since~$u>0$ in~$B_1$, we can define, for all~$x\in B_1$,
$$ V(x):=\frac{g(x)}{u(x)}.$$
We stress that~$V\in L^{\infty}_{\loc}(B_1)$, but~$V$ is not necessarily in~$L^\infty(B_1)$,
and, by construction,
\[
\begin{cases}
(-\Delta)^s u=V(x) u & \text{in $B_1$}, \\
u=0 & \text{in $\R^n \setminus B_1$},
\end{cases} \quad \text{with} \quad u>0\; {\text{ in $\;B_1$}}. 
\]
However, we are free to choose any growth of~$u_0$ from the point~$1$ that we like and therefore
the result in Corollary~\ref{CONTRO-0} does not necessarily hold in this case.

\section{Proof of Lemmata~\ref{TAYGRE} and~\ref{Nmm-POJL}}\label{TAYGRE-A}

For completeness, we provide a self-contained proof of Lemmata~\ref{TAYGRE}
and~\ref{Nmm-POJL} about the boundary and interior behavior of the fractional
Green function for the ball. We will use suitable Taylor expansions (in fact, higher order expansions can be obtained similarly).

\begin{proof}[Proof of Lemma~\ref{TAYGRE}]
Up to a rotation, we can assume that~$e=e_1$.

First, we suppose that~$n\neq 2s$ and we let
\begin{equation}\label{JLSN-12weRo}
r_0(x,z):=\frac{(\rho^2-|x|^2)(\rho^2-|z|^2)}{\rho^2|x-z|^2}.\end{equation}
We observe that
\begin{eqnarray*}r_\delta&:=&
r_0((\rho-\delta)e_1,z)\\&=&\frac{(\rho^2-(\rho-\delta)^2)(\rho^2-|z|^2)}{\rho^2|(\rho-\delta)e_1-z|^2}
\\&=&\frac{(2\rho-\delta)(\rho^2-|z|^2)}{\rho^2(|\rho e_1-z|^2-2\delta\rho+2\delta z_1+\delta^2)}\,\delta
\\&=&\left(
1+\frac{2\delta\rho-2\delta z_1}{|\rho e_1-z|^2}+O(\delta^2)
\right)
\frac{(2\rho-\delta)(\rho^2-|z|^2)}{\rho^2\,|\rho e_1-z|^2}\,\delta\\&=&
b_1\delta+O(\delta^2),
\end{eqnarray*}
where
\begin{eqnarray*}
&&b_1:=\frac{2(\rho^2-|z|^2)}{\rho\,|\rho e_1-z|^2}.
\end{eqnarray*}

Now we point out that a Taylor expansion gives that, for small~$\sigma$,
$$ \frac{1}{(\sigma+1)^{\frac{n}2}}=1-\frac{n}2\,\sigma+O(\sigma^2)
$$
and therefore, for small~$\tau$,
$$ \frac{1}{(\tau^{\frac1s}+1)^{\frac{n}2}}=1-\frac{n}2\,\tau^{\frac1s}+O(\tau^{\frac2s}).$$
For this reason, for small~$\eta$,
\begin{eqnarray*} \int_0^\eta \frac{d\tau}{(\tau^{\frac1s}+1)^{\frac{n}2}}&=&\int_0^\eta\left[
1-\frac{n}2\,\tau^{\frac1s}+O(\tau^{\frac2s})\right]\,d\tau\\&=&
\eta-\frac{sn}{2(1+s)}\,\eta^{\frac{1+s}s}+O(\eta^{\frac{2+s}s}).\end{eqnarray*}
Hence, using the substitution~$\tau=t^s$, we conclude that
\begin{eqnarray*}
\int_0^{\eta^{\frac1s}}\frac{t^{s-1}}{(t+1)^{\frac{n}2}}\,dt=\frac1s\int_0^\eta \frac{d\tau}{(\tau^{\frac1s}+1)^{\frac{n}2}}=\frac\eta{s}-\frac{n}{2(1+s)}\,\eta^{\frac{1+s}s}+O(\eta^{\frac{2+s}s}).
\end{eqnarray*}
Substituting for~$\eta=r_\delta^s$, we find that
\begin{eqnarray*}
\int_0^{r_\delta}\frac{t^{s-1}}{(t+1)^{\frac{n}2}}\,dt&=&\frac{r_\delta^s}{s}-\frac{n}{2(1+s)}\,r_\delta^{1+s}+O(r_\delta^{2+s})\\&=&
\frac{1}{s}\big(b_1+O(\delta)\big)^s\,\delta^s+O(\delta^{1+s})\\&=&
\frac{b_1^s}{s}\,\delta^s+o(\delta^s).
\end{eqnarray*}

{F}rom this, using the explicit expression of the fractional Green function (see e.g.~\cite[equation~(1.17)]{MR3461641}), up to a normalizing constant that we neglect we have that
\begin{eqnarray*}&&
G((\rho-\delta)e_1,z)\\&=&|(\rho-\delta)e_1-z|^{2s-n}\left(\frac{b_1^s}{s}\,\delta^s+o(\delta^{s})\right)\\&=&
\Big( |\rho e_1-z|^2-2\delta\rho+2\delta z_1+\delta^2 \Big)^{\frac{2s-n}2}\left(\frac{b_1^s}{s}\,\delta^s+o(\delta^{s})\right)\\&=&
|\rho e_1-z|^{2s-n}
\left(1+O(\delta) \right)\left(\frac{b_1^s}{s}\,\delta^s+o(\delta^{s})\right)\\&=&
\frac{|\rho e_1-z|^{2s-n}\,b_1^s }{s}\,\delta^s
+o(\delta^{s}).
\end{eqnarray*}
This gives the desired result in this case, since
\begin{eqnarray*}
\frac{|\rho e_1-z|^{2s-n}\,b_1^s}{s}=
\frac{2^s(\rho^2-|z|^2)^s}{s\rho^s\,|\rho e_1-z|^{n}}=a_0(z,e_1).
\end{eqnarray*}

Having proven the desired result when~$n\neq 2s$, we now focus on the case~$n=2s$, which is~$n=1$ and~$s=\frac12$. Here, we will use the logarithmic representation of the fractional Green function in this case (see e.g.~\cite[equation~(1.19)]{MR3461641}).

To this end, we observe that
\begin{eqnarray*}
\sqrt{(\rho^2-(\rho-\delta)^2)(\rho^2-z^2)}&=&\sqrt{(2\delta\rho-\delta^2)(\rho^2-z^2)}\\
&=&
\sqrt{2\rho(\rho^2- z^2)}\,\delta^{\frac12} +O(\delta^{\frac32}).
\end{eqnarray*}
Besides,
\begin{eqnarray*}
\frac{1}{|z-(\rho-\delta)|}=
\frac{1}{\rho-z}+ O(\delta)
\end{eqnarray*}
and accordingly
\begin{eqnarray*}&&
\frac{\rho^2-(\rho-\delta)z+\sqrt{(\rho^2-(\rho-\delta)^2)(\rho^2-z^2)} }{\rho |z-(\rho-\delta)|}\\&=&
\left( \rho(\rho- z)+\delta z
+\sqrt{2\rho(\rho^2- z^2)}\,\delta^{\frac12}+O(\delta^{\frac32})\right)
\left( \frac{1}{\rho(\rho-z)}+ O(\delta)\right)\\&=&
1 + \frac{\sqrt{2 \rho(\rho^2-z^2)}}{\rho (\rho-z)} \,\delta^{\frac12}+ O(\delta).
\end{eqnarray*}
For this reason,
\begin{eqnarray*}&&G((\rho-\delta)e_1,z)\\&=&
\log\left(\frac{\rho^2-(\rho-\delta)z+\sqrt{(\rho^2-(\rho-\delta)^2)(\rho^2-z^2)} }{\rho |z-(\rho-\delta)|}\right)\\&=&
\log\left(
1 + \frac{\sqrt{2 \rho(\rho^2-z^2)}}{\rho (\rho-z)} \,\delta^{\frac12}+ O(\delta)
\right)\\&=&\frac{\sqrt{2 (\rho^2-z^2)}}{\sqrt\rho\, (\rho-z)} \,\delta^{\frac12}+O(\delta).
\end{eqnarray*}
The proof of the desired result is thereby complete.
\end{proof}

\begin{proof}[Proof of Lemma~\ref{Nmm-POJL}]
We use the explicit representation of the fractional Green function of the ball (see e.g.~\cite[equation~(1.17)]{MR3461641}). Namely, up to a dimensional constant that we neglect, and using the notation in~\eqref{JLSN-12weRo},
\begin{equation*}
\rho_0^{n-2s} G(\rho_0 e,\rho_0 y)=
|e-y|^{2s-n}\int_0^{r_0(\rho_0 e,\rho_0 y)}\frac{t^{s-1}}{(t+1)^{\frac{n}2}}\,dt
.\end{equation*}
Thus, since
$$ \lim_{\rho_0\searrow0}r_0(\rho_0 e,\rho_0 y)=\lim_{\rho_0\searrow0}
\frac{(\rho^2-\rho_0^2)(\rho^2-\rho_0^2)}{\rho^2\rho_0^2|e-y|^2}=+\infty,
$$
we conclude that
$$ \lim_{\rho_0\searrow0}
\rho_0^{n-2s} G(\rho_0 e,\rho_0 y)=
|e-y|^{2s-n}\int_0^{+\infty}\frac{t^{s-1}}{(t+1)^{\frac{n}2}}\,dt,$$
which is the desired result, up to neglecting normalizing constants once again.
\end{proof}
\end{appendix}

\newcommand{\etalchar}[1]{$^{#1}$}


\begin{thebibliography}{CDP{\etalchar{+}}23}

\bibitem[Buc16]{MR3461641}
Claudia Bucur.
\newblock Some observations on the {G}reen function for the ball in the
  fractional {L}aplace framework.
\newblock {\em Commun. Pure Appl. Anal.}, 15(2):657--699, 2016.

\bibitem[BV16]{MR3469920}
Claudia Bucur and Enrico Valdinoci.
\newblock {\em Nonlocal diffusion and applications}, volume~20 of {\em Lecture
  Notes of the Unione Matematica Italiana}.
\newblock Springer, [Cham]; Unione Matematica Italiana, Bologna, 2016.

\bibitem[CDP{\etalchar{+}}23]{ciraolo2021symmetry}
Giulio Ciraolo, Serena Dipierro, Giorgio Poggesi, Luigi Pollastro, and Enrico
  Valdinoci.
\newblock Symmetry and quantitative stability for the parallel surface
  fractional torsion problem.
\newblock {\em Trans. Amer. Math. Soc.}, 376(5):3515--3540, 2023.

\bibitem[CI18]{CoIa}
G.~Cora and A.~Iacopetti.
\newblock On the structure of the nodal set and asymptotics of least energy
  sign-changing radial solutions of the fractional {B}rezis-{N}irenberg
  problem.
\newblock {\em Nonlinear Anal.}, 176:226--271, 2018.

\bibitem[CS07]{CafSil}
Luis Caffarelli and Luis Silvestre.
\newblock An extension problem related to the fractional {L}aplacian.
\newblock {\em Comm. Partial Differential Equations}, 32(7-9):1245--1260, 2007.

\bibitem[DGV13]{DaGV}
Anne-Laure Dalibard and David G\'{e}rard-Varet.
\newblock On shape optimization problems involving the fractional {L}aplacian.
\newblock {\em ESAIM Control Optim. Calc. Var.}, 19(4):976--1013, 2013.

\bibitem[DLFV22]{DLFeVi}
Alessandra De~Luca, Veronica Felli, and Stefano Vita.
\newblock Strong unique continuation and local asymptotics at the boundary for
  fractional elliptic equations.
\newblock {\em Adv. Math.}, 400:Paper No. 108279, 67, 2022.

\bibitem[DSV17a]{MR3626547}
Serena Dipierro, Ovidiu Savin, and Enrico Valdinoci.
\newblock All functions are locally {$s$}-harmonic up to a small error.
\newblock {\em J. Eur. Math. Soc. (JEMS)}, 19(4):957--966, 2017.

\bibitem[DSV17b]{DiSoVa}
Serena Dipierro, Nicola Soave, and Enrico Valdinoci.
\newblock On fractional elliptic equations in {L}ipschitz sets and epigraphs:
  regularity, monotonicity and rigidity results.
\newblock {\em Math. Ann.}, 369(3-4):1283--1326, 2017.

\bibitem[DSV19]{MR3935264}
Serena Dipierro, Ovidiu Savin, and Enrico Valdinoci.
\newblock Local approximation of arbitrary functions by solutions of nonlocal
  equations.
\newblock {\em J. Geom. Anal.}, 29(2):1428--1455, 2019.

\bibitem[FF14]{FaFe}
Mouhamed~Moustapha Fall and Veronica Felli.
\newblock Unique continuation property and local asymptotics of solutions to
  fractional elliptic equations.
\newblock {\em Comm. Partial Differential Equations}, 39(2):354--397, 2014.

\bibitem[FJ15]{FaJa}
Mouhamed~Moustapha Fall and Sven Jarohs.
\newblock Overdetermined problems with fractional {L}aplacian.
\newblock {\em ESAIM Control Optim. Calc. Var.}, 21(4):924--938, 2015.

\bibitem[GS16]{GrSe}
Antonio Greco and Raffaella Servadei.
\newblock Hopf's lemma and constrained radial symmetry for the fractional
  {L}aplacian.
\newblock {\em Math. Res. Lett.}, 23(3):863--885, 2016.

\bibitem[GT83]{GiTr}
David Gilbarg and Neil~S. Trudinger.
\newblock {\em Elliptic partial differential equations of second order}, volume
  224 of {\em Grundlehren der mathematischen Wissenschaften [Fundamental
  Principles of Mathematical Sciences]}.
\newblock Springer-Verlag, Berlin, second edition, 1983.

\bibitem[Ka{\ss}01]{MR1941020}
Moritz Ka{\ss}mann.
\newblock {\em Harnack-{U}ngleichungen f\"{u}r nichtlokale
  {D}ifferentialoperatoren und {D}irichlet-{F}ormen}, volume 336 of {\em Bonner
  Mathematische Schriften [Bonn Mathematical Publications]}.
\newblock Universit\"{a}t Bonn, Mathematisches Institut, Bonn, 2001.
\newblock Dissertation, Rheinische Friedrich-Wilhelms-Universit\"{a}t Bonn,
  Bonn, 2000.

\bibitem[Ka{\ss}11]{MR2817382}
Moritz Ka{\ss}mann.
\newblock A new formulation of {H}arnack's inequality for nonlocal operators.
\newblock {\em C. R. Math. Acad. Sci. Paris}, 349(11-12):637--640, 2011.

\bibitem[RO16]{RosOtsur}
Xavier Ros-Oton.
\newblock Nonlocal elliptic equations in bounded domains: a survey.
\newblock {\em Publ. Mat.}, 60(1):3--26, 2016.

\bibitem[ROS14]{RSSe}
Xavier Ros-Oton and Joaquim Serra.
\newblock The {P}ohozaev identity for the fractional {L}aplacian.
\newblock {\em Arch. Ration. Mech. Anal.}, 213(2):587--628, 2014.

\bibitem[ROS16]{RSSejde}
Xavier Ros-Oton and Joaquim Serra.
\newblock Regularity theory for general stable operators.
\newblock {\em J. Differential Equations}, 260(12):8675--8715, 2016.

\bibitem[ROSV17]{RSSeVapoho}
Xavier Ros-Oton, Joaquim Serra, and Enrico Valdinoci.
\newblock Pohozaev identities for anisotropic integrodifferential operators.
\newblock {\em Comm. Partial Differential Equations}, 42(8):1290--1321, 2017.

\bibitem[R{\"{u}}l15]{Ru}
Angkana R{\"{u}}land.
\newblock Unique continuation for fractional {S}chr\"{o}dinger equations with
  rough potentials.
\newblock {\em Comm. Partial Differential Equations}, 40(1):77--114, 2015.

\bibitem[SV19]{SoVa}
Nicola Soave and Enrico Valdinoci.
\newblock Overdetermined problems for the fractional {L}aplacian in exterior
  and annular sets.
\newblock {\em J. Anal. Math.}, 137(1):101--134, 2019.

\end{thebibliography}

\vfill

\end{document}